\numberwithin{equation}{section}
\newcommand{\R}{\mathbb{R}}
\newcommand{\T}{\mathcal{T}}
\newcommand{\TT}{\mathbb{T}}
\newcommand{\NN}{\mathbb{N}}
\newcommand{\V}{\mathcal{V}}
\newcommand{\A}{{\bf{A}}}
\newcommand{\B}{{\bf{B}}}
\newcommand{\vs}{{\bf{s}}}
\newcommand{\vt}{{\bf{t}}}
\newcommand{\vn}{{\bf{n}}}
\newcommand{\ve}{{\bf{e}}}
\newcommand{\vb}{{\bf{b}}}
\newcommand{\vv}{{\bf{v}}}
\newcommand{\vw}{{\bf{w}}}
\newcommand{\vx}{{\bf{x}}}
\newcommand{\vy}{{\bf{y}}}
\newcommand{\vz}{{\bf{z}}}
\newcommand{\vP}{{\bf{P}}}
\newcommand{\vV}{{\bf V}_h}
\newcommand{\vvh}{{\bf v}_h}
\newcommand{\bD}{{\bf{D}}}
\newcommand{\bA}{{\bf{A}}}
\newcommand{\bB}{{\bf{B}}}
\newcommand{\bF}{{\bf{F}}}
\newcommand{\bM}{{\bf{M}}}
\newcommand{\bT}{{\bf{T}}}
\newcommand{\Le}{\mathcal{L}}
\newcommand{\La}{\Lambda}
\DeclareMathOperator{\sspan}{span}
\DeclareMathOperator{\ddiv}{div}
\DeclareMathOperator{\im}{Im}
\newcommand{\operp}{\;{\bigcirc\hspace{-4.5mm}\perp}\;}
\newcommand{\CR}{{\rm CR}}
\renewcommand{\P}{{\rm P}}
\newcommand{\pw}{{\mathrm pw}}
\newcommand{\trinorm}[1]{\ensuremath{|\!|\!|} #1 \ensuremath{|\!|\!|}}
\newtheorem{theorem}{Theorem}
\numberwithin{theorem}{section}
\newtheorem{proposition}[theorem]{Proposition}
\newtheorem{lemma}[theorem]{Lemma}
\newtheorem{definition}{Definition}
\title[Crouzeix-Raviart is Stokes stable]{Crouzeix-Raviart triangular \\ elements   are  inf-sup  stable}
\author{Carsten Carstensen}
\address{Humboldt-Universit\"at zu Berlin,
10099 Berlin, Germany. cc@math.hu-berlin.de}
\author{Stefan A. Sauter}
\address{Institut f\"{u}r Mathematik,
Universit\"{a}t Z\"{u}rich, Winterthurerstr 190, CH-8057 Z\"{u}rich,
Switzerland. stas@math.uzh.ch}
\thanks{}
\begin{document}

\dedicatory{
dedicated to Michel Crouzeix}

\begin{abstract}
The Crouzeix-Raviart triangular finite elements are $\inf$-$\sup$
stable for the Stokes equations for any mesh with at least one interior vertex.
This result affirms a {\em conjecture of Crouzeix-Falk}
from 1989  for $p=3$. Our proof  applies
to {\em any odd degree} $p\ge 3$ and concludes  the overall stability analysis:
Crouzeix-Raviart triangular finite elements of degree  $p$  in two dimensions and the piecewise polynomials of degree $p-1$ with vanishing integral
form a stable Stokes pair {\em for all positive integers} $p$.
\end{abstract}

\maketitle

{\bf Keywords.} Stokes problem, $\inf$-$\sup$ stability,
nonconforming, Crouzeix-Raviart, $p=3$, odd degree, arbitrary $p$

{\bf AMS.} 65N30,  65N12, 65N15

\section{Introduction}
The Crouzeix-Raviart finite element method was invented to approximate
the solution to the Stokes equations for  polynomial degree
$p\in \NN$ in \cite{CrouzeixRaviart}.
The realization of the nonconforming schemes for a higher polynomial degree $p$ is less obvious in 2D
\cite{Ainsworth_Rankin,Baran_Stoyan,CCSS_CR2,ChaLeeLee,ccss_2012,BaranCVD}
and is still in its infancy in 3D \cite{CDS}. The   case $p=1$ is classical \cite{CrouzeixRaviart} and  the 2D version of $p=2$ is known as the Fortin-Soulie element \cite{Fortin_Soulie}.
The  Crouzeix-Raviart finite element spaces are significantly  different for even and odd polynomial degrees \cite{ccss_2012}.

The important cubic case $p=3$  was the topic of the seminal contribution  \cite{Crouzeix_Falk} that establishes, in particular,  the macro-element technology
for the  proof of the Stokes stability for nonconforming  finite element methods. It also guarantees
$\inf$-$\sup$ stability up to exceptional configurations of four edge-connected triangles
 characterized by  some geometric quantity $D=0$ in \cite[Prop. 3.1]{Crouzeix_Falk}  and ends with a conjecture:
 {\em We conjecture  that } $\CR^p_0(\T;\R^2)\times \P_{p-1}(\T)/\R$ forms {\em  a stable Stokes pair for any triangulation ...
satisfying the minimal angle condition and containing an interior vertex} (quoted, in modified notation, from the Remark in \cite{Crouzeix_Falk}).
Theorem~\ref{thm1}  below in this paper for  $p=3$ resolves this Crouzeix-Falk  conjecture from 1989 and is recycled to cover all the open remaining cases  $p=3,5,7,\dots$
simultaneously; the accompanying paper  \cite{CCSS_CR2} further clarifies the role of the singular functions for $p=5,7,9,\dots$.

The stability of  the Crouzeix-Raviart finite element method in the 2D Stokes equations
for  $p=4,6,8,\dots$ is shown  in  \cite{Baran_Stoyan} with a discussion of singular geometries.

The cubic case is the hardest, because   combinatorial proofs and the concept of singular points  do not fully characterize the
kernel of the piecewise divergence operator (cf. \cite[Rem. 3.1]{ScottVogelius} for proofs)
inside the macro-element methodology \cite{zbMATH03825382,Crouzeix_Falk,Stenberg_macro}.
It is emphasized that singular vertices do {\em not } play any role in this paper unlike, e.g.,
in \cite{Baran_Stoyan,CCSS_CR2,GuzmanScott2019,ScottVogelius}.

\medskip

The remaining parts of this paper are organised as follows. Theorem~\ref{thm1} and its necessary notation follow in Section~2. Section~3 reduces the
analysis to four parameters per triangle, before Section~4 introduces the geometry of a interior vertex patch.  Section~5 determines the coefficient matrix
that equivalently describes the  assertion by ist kernel vectors. The key point follows in Section~6 with the proof that the constant pressure is the only kernel
vector.  The linear algebra of some kernel vectors is postponed to Section~7. Some remarks on the macro-element technique in Section~8  conclude  the proof and the paper.

\section{Setting}
Given a  (shape-) regular triangulation $\T$ of the planar polygonal bounded Lip\-schitz domain $\Omega\subset\R^2$ into triangles
 with set of edges $\mathcal{E}$ (resp. edges on the boundary  $\mathcal{E}(\partial\Omega)$), let  $P_p(\T;\R^2)\equiv P_p(\T)\times P_p(\T)$ denote the piecewise polynomials
of (total) degree at most $p\ge 1$ in two components.
 The nonconforming velocity space
\begin{align*}
\CR^p_0(\T;\R^2)&:=\{ \vv_h\in P_p(\T;\R^2) : \, \vv_h \text{ is continuous at Gauss points of } E\in\mathcal{E} \\
&\text{and vanishes at Gauss points of any boundary edge  }  E\in\mathcal{E}(\partial\Omega) \}
\end{align*}
is considered with the piecewise polynomial pressure space  $\P_{p-1}(\T)/\R$. The pressure space $\P_{p-1}(\T)\cap L^2_0(\Omega) \equiv\P_{p-1}(\T)/\R$
 is the quotient space of the  piecewise polynomials  $\P_{p-1}(\T)$ of (total) degree at most $p-1$ divided by the constants $\R$ and identified with
the representations in $L^2_0(\Omega):=\{ q\in L^2(\Omega):\int_\Omega q\, dx=0\}$ with integral zero.
The Gauss points of an edge $E$ are the affine images  of the zeros in the interval $(-1,1)$
of the $p$-th order Legendre polynomial $\Le_p:[-1,1]\to\R $ normalized by $\Le_p(1)=1$.
The continuity at the Gauss points in the above definition of
the velocity space $\CR^p_0(\T;\R^2)$ along an edge  $E$ is equivalent to the orthogonality of the jump $[\vv_h]_E$ of $\vv_h\in P_p(\T;\R^2)$ onto the
one-dimensional polynomials $P_{p-1}(E;\R^2)$  in $L^2(E;\R^2)$.

The main result in this paper is the Stokes stability:  The piecewise divergence
\begin{equation}\label{eqccintro1}
\ddiv_{\pw}:\CR^p_0(\T;\R^2) \to \P_{p-1}(\T)/\R
\end{equation}
is surjective and allows for  a bounded  linear right-inverse with an $h$-independ\-ent operator norm  $C_p$.
Standard notation on Lebesgue and Sobolev spaces and their (semi-) norms applies throughout the paper.
The pressure space is endowed with the
 $L^2$ norm $ \|\bullet \|_{L^2(\Omega)}$  in $\Omega$ and the nonconforming velocities are endowed with
the piecewise $H^1$ seminorm $\trinorm{\bullet}_\pw$,  defined by
\begin{equation}\label{eqpiecewiseH1}
\trinorm{\vv_h}_\pw=\sqrt{\textstyle \sum_{T\in\T} | \vv_h|_{H^1(\text{\rm int}(T))}^2 }\quad\text{for}\quad  \vv_h\in \CR^p_0(\T;\R^2),
\end{equation}
which is a norm 
owing to the discrete Friedrichs inequality \cite[p. 299]{scottbrenner3}; $\text{\rm int}(T)$ is the interior of a (compact) triangle $T$  (the convex hull of its three vertices).
The  reciprocal $  \beta_p=1/C_p>0$  of the operator norm of the right-inverse of \eqref{eqccintro1}
is known as an $\inf$-$\sup$ constant
\[
 \beta_p= \inf_{q_h\in P_{p-1}(\T)\setminus \R} \sup_{\vv_h\in \CR^p_0(\T;\R^2) } \frac{ \int_\Omega q_h\ddiv_{\pw}\vv_h dx }{ \| q_h \|_{L^2(\Omega)} \trinorm{\vv_h}_\pw}.
\]
The point is that a positive lower bound of the $\inf$-$\sup$ constant $\beta_p$ is independent of the mesh-size in the triangulation.
Two universal positive constants $\varepsilon$ and $M$ describe the
dependency of the  positive lower bound of $ \beta_p$ on the triangulation $\T$ in
a set of admissible triangulations $\TT$.

\begin{definition}[admissible triangulations]\label{defadmissibletriangulations}
Given the positive constants $\varepsilon$ and $M$, let
$\TT$ be  the class of all regular triangulations $\T$ (of the domain $\Omega$ into triangles)
with at least one interior vertex and all interior angles in a triangle $K\in\T$ are bounded from
below by $\epsilon>0$. Furthermore, any triangle $K\in\T$ has either  at least one
vertex in the interior of the domain $\Omega$ (then $M:=0$), or there is a finite sequence of at most $ M\ge 1$
triangles  $K_0,K_1,\dots,K_k\in\T $ such that $k\le M$,
$\partial K_j\cap \partial K_{j+1}$ is a common edge of $K_j$ and $K_{j+1}$ for all $j=0,\dots, k-1$, $K=K_0$, and $K_k$ has at least one vertex in the interior of $\Omega$.
\end{definition}

Each triangulation $\T\in\TT$ in the uniformly shape-regular class of admissible triangulations $\TT$  has interior vertices and not too many triangles in $\T$
have all three vertices  on the boundary $\partial\Omega$ of the domain $\Omega$.

\begin{theorem}\label{thm1} For all $p\in\NN$ there exists a constant $\beta>0$
such that, for all $\T\in\TT$,
the piecewise divergence operator \eqref{eqccintro1} is surjective
and has a bounded inverse in the sense that
\[
\forall g\in \P_{p-1}(\T)/\R \; \exists \vv_h\in \CR^p_0(\T;\R^2)\; g=\ddiv_\pw \vv_h\text{ and } \beta_p\trinorm{\vv_h}_\pw\le \|g\|_{L^2(\Omega)}.
\]
(The constant $\beta_p$ exclusively depends on $p,\epsilon$, and $M$ in the definition of $\TT$.)
\end{theorem}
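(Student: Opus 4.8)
The plan is to prove inf-sup stability by the macro-element technique of Crouzeix–Falk and Stenberg, reducing a global statement to a local one around interior vertices. First I would recall that, by the standard macro-element argument, it suffices to exhibit a fixed finite family of macro-elements (here, the closed star $\omega_z$ of an interior vertex $z$, i.e.\ the patch of triangles sharing $z$) covering $\Omega$ with controlled overlap, and to prove a \emph{local} inf-sup condition: for each such patch $\omega$, the only piecewise pressure $q_h\in \P_{p-1}(\omega)$ with $\int_{\omega}q_h\ddiv_\pw\vv_h\,dx=0$ for all $\vv_h\in\CR^p_0(\omega;\R^2)$ (the local CR space with vanishing boundary Gauss-point values on $\partial\omega$) is the constant. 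Combining the local result over all interior-vertex patches, using the uniform shape-regularity ($\varepsilon$) and the bounded-chain condition ($M$) from Definition~\ref{defadmissibletriangulations} to pass from patches back to the whole mesh and to handle boundary triangles whose vertices all lie on $\partial\Omega$, and invoking a scaling/compactness argument over the finitely many combinatorial patch types to get an $h$-independent constant, yields Theorem~\ref{thm1}. The even-degree case $p$ even is already covered by \cite{Baran_Stoyan}, so the work is for odd $p\ge 3$, and by the structure of the paper all odd degrees are treated at once.

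The heart of the matter is therefore the local claim on a single interior-vertex patch, and the strategy (matching Sections 3–6 of the paper) is to make it completely explicit. First I would use integration by parts edge-by-edge to rewrite $\int_\omega q_h\ddiv_\pw\vv_h\,dx$ as a sum of interior-edge jump terms $\sum_E\int_E [q_h]_E\,(\vv_h\cdot\vn_E)\,ds$ plus boundary terms, and then exploit the fact that the CR constraint makes $\vv_h\cdot\vn_E$ orthogonal to $\P_{p-1}(E)$; this shows only the top Legendre coefficient of $[q_h]_E$ along each interior edge and the piecewise-constant jump data actually interact with the test functions, so the condition ``$q_h$ is $L^2$-orthogonal to $\ddiv_\pw\CR^p_0$'' collapses to a finite linear system in a few parameters per triangle. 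Following Section~3, I would parametrize each $q_h|_T$ by four scalars tied to its behaviour on the three edges (degrees of freedom surviving the pairing), set up in Sections~4–5 the geometry of the fan of triangles around $z$ (angles $\theta_j$, edge vectors $\ve_j$, outward normals), and assemble a coefficient matrix $\bM=\bM(\theta_1,\dots,\theta_n)$ whose kernel is exactly the space of admissible pressures modulo constants.

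The main obstacle—and the genuinely new content of the paper—is to prove that this coefficient matrix $\bM$ has a one-dimensional kernel spanned by the constant pressure \emph{for every} admissible patch geometry, \emph{including} the degenerate configurations $D=0$ left open by \cite[Prop.~3.1]{Crouzeix_Falk}, and \emph{without} appealing to the theory of singular vertices. I expect this to require a careful linear-algebra analysis of $\bM$: peeling off the constant mode, exhibiting explicit relations among the rows/columns coming from the fact that the angles around an interior vertex sum to $2\pi$ (this trigonometric identity is what ultimately forces non-degeneracy), and reducing to a smaller structured matrix whose determinant can be shown nonzero by an inductive or telescoping argument on the number of triangles in the fan (Sections~6–7). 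Once the local kernel is pinned down to the constants, the remaining steps—quantitative local inf-sup via finite-dimensionality and homogeneity over the compact set of admissible angle-tuples, then the macro-element assembly with the $M$-chain argument for all-boundary triangles (Section~8)—are routine, and the $p$-dependence of $\beta_p$ enters only through the (fixed) dimension of the local spaces.
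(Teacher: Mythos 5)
Your high-level strategy matches the paper's: macro-element reduction to an interior-vertex patch, then reduction of the local surjectivity of $\ddiv_\pw$ to the kernel of a structured coefficient matrix in the angles, and finally the $M$-chain / shape-regularity bookkeeping to get an $h$-independent constant. But the two ingredients you lean on for the hard part --- pinning the local kernel down to the constants --- are not the ones the paper uses, and the specific guesses you made would not lead to the argument. The four surviving parameters per triangle are the evaluations of $q_h|_T$ at the three vertices \emph{plus the integral over $T$}, coming from the Guzman--Scott characterization of $\ddiv\B_p(T)$ (Proposition~\ref{propGuzman-Scott}); they are not ``tied to its behaviour on the three edges,'' and without that specific characterization one does not get the clean collapse to the $5m\times 4m$ Vandermonde matrix $\bM$ of \eqref{eqcccubicsection2matrixM}. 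More importantly, the non-degeneracy is \emph{not} forced by the identity $\sum_j\omega(j)=2\pi$. The geometric inputs actually used in Section~6 are (i) the strict positivity $\gamma_j=\kappa_j+\mu_j>0$, which holds because $\gamma_j^\pm$ are ratios of a squared edge length to twice a triangle area, and (ii) the fact that the convex hull of the closed patch $\overline{\omega(\vz)}$ has at least three extreme points, which are convex corners where $\kappa_j>0$. The matrix argument is then not a telescoping determinant induction but a weakly-chained-diagonal-dominance / Gershgorin argument applied to a tridiagonal matrix with row entries $-\lambda_j,1,\lambda_j-1$ (and, in the $\eta\ne0$ case, centres $|c_j|>1$), which is a qualitatively different mechanism.

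Two further points worth noting. Your integration-by-parts expression is misstated: the Crouzeix--Raviart constraint makes the \emph{jump} $[\vv_h]_E$ orthogonal to $P_{p-1}(E;\R^2)$, not $\vv_h\cdot\vn_E$ itself, and the interface term mixes jumps and averages of both $q_h$ and $\vv_h$; the paper sidesteps this entirely by computing the entries $\La_T(\ddiv\vb(k))$ directly triangle by triangle in Lemma~\ref{lemma:ccsas3}. And ``all odd degrees at once'' requires a concrete modification: the edge-bubble $\psi_j$ in \eqref{eqcccpsiinubicsection} is not in $\CR^p_0$ for $p\ge5$ and must be replaced by \eqref{eqcccpsiinubicsectionpartplarge}, chosen precisely so that the functionals $\La_T(\ddiv(\psi_j\vn(j)))$ are $p$-independent and the Vandermonde matrix $\bM$ is literally the same as in the cubic case.
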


The discussion in the introduction about the existing references  \cite{Baran_Stoyan,CCSS_CR2,CrouzeixRaviart,Fortin_Soulie} shows
that  the remaining  critical case  $p=3$  is examined below.

   \begin{figure}
       \centering
       \begin{tikzpicture}[%
    line width=0.8pt,%
    scale=2.3,%
    line join=round,%
    line cap=round,%
    >=stealth',%
    rotate=-105]

    \draw (0,0) node[shape=coordinate] (z) {};
    \draw (2,0) node[shape=coordinate] (A1) {A1};
    \draw ($(z)!1!60:(A1)$) node[shape=coordinate] (A2) {A2};
    \draw ($(z)!1.1!45:(A2)$) node[shape=coordinate] (A3) {A3};
    \draw ($(z)!0.8!50:(A3)$) node[shape=coordinate] (A4) {A4};
    \draw ($(z)!1.1!60:(A4)$) node[shape=coordinate] (A5) {A5};
    \draw ($(z)!1!60:(A5)$) node[shape=coordinate] (A6) {A6};

    \draw ($(z)!.5!(A1)$) node[shape=coordinate] (zA1) {zA1};
    \draw ($(z)!.5!(A2)$) node[shape=coordinate] (zA2) {zA2};
    \draw ($(A1)!.5!(A2)$) node[shape=coordinate] (A12) {A12};
    \draw ($(A2)!.5!(A3)$) node[shape=coordinate] (A23) {A23};
    \draw ($(A3)!.5!(A4)$) node[shape=coordinate] (A34) {A34};
    \draw ($(A4)!.5!(A5)$) node[shape=coordinate] (A45) {A45};
    \draw ($(A5)!.5!(A6)$) node[shape=coordinate] (A56) {A56};
    \draw ($(A6)!.5!(A1)$) node[shape=coordinate] (A61) {A61};

    \color{black!60}

    \begin{scope}[font=\small]
        \pic[draw, angle radius=1.2cm, angle eccentricity=0.7,
        "$\omega(1)$"] {angle = A1--z--A2};
    \pic[draw, angle radius=1.2cm, angle eccentricity=0.7,
        "$\omega(2)$"] {angle = A2--z--A3};
        \pic[draw, angle radius=1.2cm, angle eccentricity=0.7,
        "$\omega(j)$"] {angle = A4--z--A5};
        \pic[draw, angle radius=1.2cm, angle eccentricity=0.6,
        "$\omega(m)$"] {angle = A6--z--A1};

        \pic[draw, angle radius=1.4cm, angle eccentricity=0.65,
        "$\alpha(1)$"] {angle = A2--A1--z};
        \pic[draw, angle radius=1.2cm, angle eccentricity=0.65,
        "$\alpha(2)$"] {angle = A3--A2--z};
        \pic[draw, angle radius=1.1cm, angle eccentricity=0.6,
        "$\alpha(j)$"] {angle = A5--A4--z};
        \pic[draw, angle radius=1.4cm, angle eccentricity=0.70,
        "$\alpha(m)$"] {angle = A1--A6--z};

        \pic[draw, angle radius=1.2cm, angle eccentricity=0.6,
        "$\beta(1)$"] {angle = z--A2--A1};
        \pic[draw, angle radius=1.2cm, angle eccentricity=0.6,
        "$\beta(2)$"] {angle = z--A3--A2};
        \pic[draw, angle radius=1.2cm, angle eccentricity=0.7,
        "$\beta(j)$"] {angle = z--A5--A4};
        \pic[draw, angle radius=1.4cm, angle eccentricity=0.75,
        "$\beta(m)$"] {angle = z--A1--A6};
    \end{scope}

    \draw (zA1) -- +(90:0.15) arc (90:180:0.15);
    \draw[fill] ($(zA1)+(135:0.08)$) circle[radius=0.2pt];
    \draw (zA2) -- +(150:0.15) arc (150:240:0.15);
    \draw[fill] ($(zA2)+(195:0.08)$) circle[radius=0.2pt];

    \color{black}
    \draw (A1) -- (A2) -- (A3) -- (A4) -- (A5) -- (A6) -- cycle;
    \draw (z) -- (A2);
    \draw (z) -- (A3);
    \draw (z) -- (A1);
    \draw (z) -- (A4);
    \draw (z) -- (A5);
    \draw (z) -- (A6);

    \color{black!60}
    \draw[->] (zA1) -- +(90:0.3);
    \draw[->] (zA2) -- +(150:0.3);

    \draw[->] (zA1) -- +(180:0.3);
    \draw[->] (zA2) -- +(240:0.3);

    \color{black}
    \draw (z) node[draw,circle,fill=white] {$\vz$};
    \draw ($(z)!1.1!(A1)$) node {$\vP(1)$};
    \draw ($(z)!1.1!(A2)$) node {$\vP(2)$};
    \draw ($(z)!1.1!(A3)$) node {$\vP(3)$};
    \draw ($(z)!1.1!(A4)$) node {$\vP(j)$};
    \draw ($(z)!1.1!(A5)$) node {$\vP(j+1)$};
    \draw ($(z)!1.15!(A6)$) node {$\vP(m)$};

    \draw ($(z)!0.66!(A12)$) node[xshift=10] {$T(1)$};
    \draw ($(z)!0.76!(A23)$) node[yshift=10] {$T(2)$};
   \draw ($(z)!0.66!(A34)$) node {$\dots $};
    \draw ($(z)!0.66!(A45)$) node {$T(j)$};
    \draw ($(z)!0.66!(A56)$) node {$\dots$};
    \draw ($(z)!0.66!(A61)$) node {$T(m)$};

    \draw ($(zA1)+(65:0.3)$) node {$\vn(1)$};
    \draw ($(zA1)+(230:0.25)$) node {$\vt(1)$};

    \draw ($(zA2)+(115:0.35)$) node {$\vn(2)$};
    \draw ($(zA2)+(280:0.3)$) node {$\vt(2)$};

    \color{black}
    \draw[line width=0.6pt]
        ($(z)!1.2!(A12)$) node {}
        edge[->,bend right] ($(z)!0.65!(A1)$);
    \draw ($(z)!1.2!(A12)$) node[right]
        {$E(1) = \operatorname{conv}\{\vz, \vP(1)\}$};
\end{tikzpicture}
       \caption{Nodal patch $\omega(\vz)$}
       \label{fig:nodal_patch}
   \end{figure}

\section{Elimination of interior degrees of freedom}\label{sectionEliminationofinteriordegreesoffreedom}
The analysis of conforming finite elements for the 2D Stokes problem simplifies
if one eliminates the contributions from the interior degrees of freedom \cite{GuzmanScott2019}.  The latter
are associated  to the cubic bubble-functions multiplied with vectors of polynomials of (total) degree at most $p-3$. We suppose
$p\ge 3$ in the sequel and abbreviate
the vector-valued bubble-functions, that vanish on the boundary $\partial T$ of the triangle $T$,  by
$\B_p(T):= P_p(T;\R^2)\cap H^1_0(\text{\rm int}(T);\R^2)$.

The remaining degrees of freedom are {four} (individually) for each triangle $T$  and any polynomial  $g\in P_{p-1}(T)$
of (total) degree at most $p-1$  as follows: The  evaluation $g(\vy)$ at the three vertices $\vy$ of  $T$ and
the integral $ \int_T g\, dx$.
Let $\La_T:P_{p-1}(T)\to\R^4$ denote the vector of those four values.

\begin{proposition}[Guzman-Scott]\label{propGuzman-Scott}
Any polynomial $g\in P_{p-1}(T)$ 
is the divergence of a bubble-function in $\B_p(T)$  (i.e., $g=\ddiv \vb$ pointwise in $T$ for some $\vb\in \B_p(T)$) if and only if $\La_T(g)=\mathbf{0}$.
\end{proposition}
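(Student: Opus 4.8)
\emph{Proof proposal.} The plan is to identify the image of the restriction $\ddiv:\B_p(T)\to P_{p-1}(T)$ with $\ker\La_T$ by establishing one easy inclusion and then matching dimensions. First I would record the elementary description $\B_p(T)=b_T\,P_{p-3}(T;\R^2)$, where $b_T:=\lambda_1\lambda_2\lambda_3$ is the cubic bubble built from the barycentric coordinates of $T$: a component of a field in $P_p(T;\R^2)$ that vanishes on $\partial T$ vanishes on each edge line $\{\lambda_j=0\}$, hence is divisible by $\lambda_1\lambda_2\lambda_3$, while conversely $b_T\,P_{p-3}(T;\R^2)\subseteq H^1_0(\mathrm{int}(T);\R^2)$. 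In particular $\dim\B_p(T)=2\dim P_{p-3}(T)=(p-1)(p-2)$.

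Next I would prove the inclusion $\ddiv\,\B_p(T)\subseteq\ker\La_T$. Given $\vb=b_T\vw\in\B_p(T)$ with $\vw\in P_{p-3}(T;\R^2)$, the Gauss divergence theorem and $\vb|_{\partial T}=0$ give $\int_T\ddiv\vb\,dx=\int_{\partial T}\vb\cdot\vn\,ds=0$. At any vertex $\vy$ of $T$ two of the three barycentric coordinates vanish, so $b_T(\vy)=0$ and every term of $\nabla b_T=\lambda_2\lambda_3\nabla\lambda_1+\lambda_3\lambda_1\nabla\lambda_2+\lambda_1\lambda_2\nabla\lambda_3$ carries a factor vanishing at $\vy$; hence $\nabla b_T(\vy)=0$ and $\ddiv\vb(\vy)=\nabla b_T(\vy)\cdot\vw(\vy)+b_T(\vy)\ddiv\vw(\vy)=0$. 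Thus $\La_T(\ddiv\vb)=\mathbf{0}$.

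It then remains to check $\dim(\ddiv\,\B_p(T))=\dim\ker\La_T$, after which equality of the two spaces follows since one lies in the other and both are finite-dimensional. On one side, $\La_T$ is onto $\R^4$: an affine function realizes any prescribed vertex values, and the quadratic $\lambda_1\lambda_2\in P_{p-1}(T)$ (here $p\ge3$) vanishes at all three vertices with positive integral over $T$, which tunes the fourth component; hence $\dim\ker\La_T=\dim P_{p-1}(T)-4=\binom{p+1}{2}-4$. On the other side, the kernel of $\ddiv$ on $\B_p(T)$ consists of the divergence-free fields in $\B_p(T)$; since $\mathrm{int}(T)$ is simply connected, such a field is $\nabla^\perp\psi:=(\partial_2\psi,-\partial_1\psi)$ for a polynomial $\psi$ of degree at most $p+1$, unique up to an additive constant, and $\vb|_{\partial T}=0$ forces $\psi$ to have vanishing trace and vanishing normal derivative along $\partial T$, i.e.\ $\psi\in H^2_0(\mathrm{int}(T))$; a polynomial in $H^2_0(\mathrm{int}(T))$ vanishes to second order on each edge line, hence is divisible by $(\lambda_1\lambda_2\lambda_3)^2=b_T^2$, so $\psi=b_T^2 r$ with $r\in P_{p-5}(T)$, and $r\mapsto\nabla^\perp(b_T^2 r)$ is a bijection onto this kernel. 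Therefore $\dim\ker(\ddiv|_{\B_p(T)})=\dim P_{p-5}(T)=\binom{p-3}{2}$ (read as $0$ for $p\le4$), and a direct computation gives $\dim(\ddiv\,\B_p(T))=(p-1)(p-2)-\binom{p-3}{2}=\binom{p+1}{2}-4=\dim\ker\La_T$, which completes the argument.

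I expect the main obstacle to be the kernel computation in the last paragraph, specifically the two structural facts that a divergence-free $H^1_0$ polynomial vector field on $T$ is the rotated gradient of an $H^2_0$ polynomial stream function (using simple connectedness of $\mathrm{int}(T)$) and that a polynomial in $H^2_0(\mathrm{int}(T))$ is divisible by $b_T^2$; once these are in hand, the rank--nullity bookkeeping is routine. Should the stream-function route prove delicate, an alternative is to build a right inverse of $\ddiv|_{\B_p(T)}$ on $\ker\La_T$ explicitly by induction on $p$, correcting first by a cubic-bubble term and then lifting the lower-degree data, but the dimension count above is the shorter path.
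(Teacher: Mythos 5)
Your proof is correct. The paper does not actually prove this proposition; it only cites \cite{MR696548} and \cite[Prop.\ 2]{GuzmanScott2019} (the latter with what the authors describe as a combinatorial proof) and remarks that the cubic case follows from elementary considerations. Your argument supplies a self-contained proof by the classical Vogelius/Scott--Vogelius stream-function route: the easy inclusion $\ddiv\,\B_p(T)\subseteq\ker\La_T$, then rank--nullity with the key structural fact that a divergence-free polynomial field in $\B_p(T)$ is $\nabla^\perp$ of a polynomial stream function $\psi$ whose vanishing together with that of $\nabla\psi$ on $\partial T$ forces $b_T^2\mid\psi$, whence $\ker(\ddiv|_{\B_p(T)})\cong b_T^2\,P_{p-5}(T)$. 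The bookkeeping $(p-1)(p-2)-\binom{p-3}{2}=\binom{p+1}{2}-4$ is right, and surjectivity of $\La_T$ via an affine function plus $\lambda_1\lambda_2$ is also right. Compared with the combinatorial argument the paper points to, your approach has the advantage of computing the kernel of $\ddiv$ on bubbles explicitly (which is also the bridge to the Scott--Vogelius theory of singular vertices), at the cost of invoking the stream-function representation and the divisibility-by-$b_T^2$ lemma, both of which you correctly identify as the two nontrivial ingredients.
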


\begin{proof}
This is essentially known since \cite{MR696548}, allows a 3D version \cite{MR3356019}, and is given  with a combinatorial proof
in \cite[Prop. 2]{GuzmanScott2019}. The cubic case also follows from  elementary considerations with  $\ddiv \B_3(T)$.
\end{proof}

The proposition allows a  reduction to interpolating vertex values  \cite{GuzmanScott2019} in this paper as follows:
First we consider a patch $\omega(\vz) :=\textrm{int} (\cup\T(\vz)) $, covered by the $m$ neighbouring triangles  $\T(\vz):=\{ T\in \T: \vz\in\V(T)\}$
of an interior vertex  $\vz\in \V(\Omega)$
as depicted in  Figure~\ref{fig:nodal_patch}. 
(Here and throughout the paper $\V(T)$ denotes the set of the three vertices of the triangle $T$ and $\V(\Omega)$ denotes the set of all interior vertices in
the triangulation $\T\in\TT$.)
Second we investigate \eqref{eqccintro1} by the selection of an edge-oriented
basis of a linear subspace $\vV(\vz)$ of $\CR^p_0(\T(\vz);\R^2)$ defined below in \eqref{eqcccubicsection1} of dimension $5m$;
the precise definition of  $\vV(\vz)$  is not important in this section as long as it only intersects trivially  with the bubble-functions $ \B_p(\T(\vz))$ below.
Recall that $\La_T$ denotes the above four degrees of freedom
(the evaluation at the three vertices of plus the integral over the  triangle)
for each of the $m$ triangles   $T\in \T(\vz)$ and let  $\Lambda: P_{p-1}(\T(\vz))\to\R^{4m}$ denote
their patchwise version: Identify $\R^{4m}\equiv (\R^4)^m$ and  let $(\Lambda g)|_T:=\La_T(g|_T)\in\R^4$
for all  $T\in \T(\vz)$ and $g\in  P_{p-1}(\T(\vz))$.
The homogeneous boundary conditions (in the sense of Crouzeix-Raviart at the Gauss points of the edges along the boundary $\partial\omega(\vz)$)
of the discrete velocities
$\vvh \in \vV(\vz)\subset \CR^p_0(\T(\vz);\R^2)$ lead to
$\int_{\omega(\vz)}\ddiv_\pw \vvh dx=0$, i.e.,  ${\rm  div}_\pw: \vV(\vz)\to P_{p-1}(\T(\vz))/\R$.
Hence the composition
$\Lambda\circ {\rm  div}_\pw: \vV(\vz)\to \R^{4m}$ is a linear operator and its range
is  a subspace  of $\R^{4m} $ orthogonal to 
\begin{equation}\label{eqccdefofs} 
\vs:=(0,0,0,1,0,0,0,1,\dots, 1)=\sum_{j=1}^m \ve_{4j}\in\R^{4m}
\end{equation}
in terms of the $j$-th canonical unit vector  $\ve_j\in \R^{4m} $.
In fact, given  $\vvh \in \vV(\vz)$ with $\vx:=\Lambda\, {\rm  div}_\pw \vvh \in \R^{4m}$,
the component  $x_{4j}=\int_{T(j)} {\rm  div}\, \vvh dx$
is the integral of $ {\rm  div}\, \vvh $
over the triangle $T(j)$ of number $j=1,\dots,m$. Then 
\begin{equation}\label{eqccorthogonalityofs}
\vx\cdot \vs=\sum_{j=1}^m\int_{T(j)} {\rm  div}\, \vvh dx=\int_{\omega(\vz)}\ddiv_\pw \vvh dx=0
\end{equation}
is the vanishing integral over the patch $\omega(\vz)$. 
Let  $ \B_p(\T(\vz))$ $\equiv \bigoplus_{j=1}^m  \B_p(T(j))\subset \CR^p_0(\T;\R^2)$ denote the piecewise bubble-functions  in the patch $\T(\vz)$.
This and Proposition~\ref{propGuzman-Scott} clarify that the piecewise divergence operator
\begin{equation}\label{eqccpiecewisedivergenceoperatorlocal}
{\rm  div}_\pw: \vV(\vz)\oplus \B_p(\T(\vz))\to P_{p-1}(\T(\vz))/\R
\end{equation}
is a linear map {\em into}  the pressure space $P_{p-1}(\T(\vz))/\R $ with zero integral.

\begin{lemma}\label{lemma:ccsas1}
Suppose  $ \vV(\vz)$ is some 
linear subspace of  $\CR^p_0(\T(\vz);\R^2)$
such that  the linear map  $\Lambda\circ {\rm  div}_\pw: \vV(\vz)\to \vs^\perp$ is surjective
{\em onto} the $(4m-1)$-dimensional orthogonal complement $\vs^\perp$ of the vector  $\vs$ in $\R^{4m}$. Then
the piecewise divergence operator \eqref{eqccpiecewisedivergenceoperatorlocal}
is  surjective {\em onto}  $P_{p-1}(\T(\vz))/\R $.
\end{lemma}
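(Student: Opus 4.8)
The plan is to establish surjectivity of \eqref{eqccpiecewisedivergenceoperatorlocal} directly, by splitting the task into a ``four-degrees-of-freedom'' correction carried out in $\vV(\vz)$ followed by a ``bubble'' correction carried out in $\B_p(\T(\vz))$. So let an arbitrary target $q\in P_{p-1}(\T(\vz))/\R$ be given. First I would fix the representative of $q$ with zero mean over the patch, $\int_{\omega(\vz)}q\,dx=0$; this is legitimate since the quotient is taken modulo the global constants on $\omega(\vz)$. With this normalization, $\vs\cdot\Lambda q=\sum_{j=1}^m\int_{T(j)}q\,dx=\int_{\omega(\vz)}q\,dx=0$ by \eqref{eqccorthogonalityofs}'s computation, so $\Lambda q$ lies in $\vs^\perp$, which is exactly the space onto which the hypothesis asserts $\Lambda\circ\ddiv_\pw$ maps $\vV(\vz)$.

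Next, invoke the surjectivity hypothesis to choose $\vvh\in\vV(\vz)$ with $\Lambda\,\ddiv_\pw\vvh=\Lambda q$ in $\R^{4m}$, and set $g:=q-\ddiv_\pw\vvh$, a well-defined element of $P_{p-1}(\T(\vz))$ because $\ddiv_\pw\vvh$ is piecewise of degree at most $p-1$. By construction $\Lambda g=\mathbf 0$, and since $\Lambda$ is precisely the patchwise assembly of the local functionals $\La_{T}$ (the three vertex evaluations together with the triangle integral), this is equivalent to $\La_{T(j)}(g|_{T(j)})=\mathbf 0$ for \emph{each} triangle $T(j)\in\T(\vz)$.

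Finally, apply Proposition~\ref{propGuzman-Scott} triangle by triangle: for every $j$ there is a bubble $\vb_j\in\B_p(T(j))$ with $g|_{T(j)}=\ddiv\vb_j$ pointwise in $T(j)$, and assembling $\vb:=\bigoplus_{j=1}^m\vb_j\in\B_p(\T(\vz))$ yields $\ddiv_\pw\vb=g=q-\ddiv_\pw\vvh$ on each $\mathrm{int}(T(j))$, hence $\ddiv_\pw(\vvh+\vb)=q$, which proves the claimed surjectivity. I do not expect a genuine obstacle in this lemma: the substance is entirely in the already-quoted Guzman--Scott characterization of $\ddiv\B_p(T)$ plus the bookkeeping that the patchwise functional $\Lambda$ decouples into the per-triangle functionals $\La_{T(j)}$ to which that characterization applies. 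The only point that needs a moment's care is the normalization of the pressure representative so that $\Lambda q\in\vs^\perp$ and the hypothesis becomes applicable; the trivial-intersection property of $\vV(\vz)$ with $\B_p(\T(\vz))$ is needed only to make the direct sum in \eqref{eqccpiecewisedivergenceoperatorlocal} meaningful and plays no role in the surjectivity argument itself.
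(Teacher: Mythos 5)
Your proof is correct and follows essentially the same route as the paper's: normalize the target to zero mean so that $\Lambda q\in\vs^\perp$, use the surjectivity hypothesis to find $\vvh\in\vV(\vz)$ matching the four functionals on each triangle, and correct the remainder triangle-by-triangle via Proposition~\ref{propGuzman-Scott}. The only additions are harmless glosses (the decoupling of $\Lambda$ into the $\La_{T(j)}$'s and the remark that the trivial-intersection condition is irrelevant to surjectivity), which the paper takes as understood.
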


\begin{proof}
Given any $f\in P_{p-1}(\T(\vz))/\R $ with $ \vx:= \Lambda f\in\R^{4m}$,
$\int_{\omega(\vz)} f\, dx=0$ implies $\vx\cdot \vs=0$; written $\vx\in \vs^\perp$.
The surjectivity of  $\Lambda\circ {\rm  div}_\pw: \vV(\vz)\to \vs^\perp$ leads to some
 $\vvh \in \vV(\vz)$ with $\vx=\Lambda\, {\rm  div}_\pw \vvh $. Consequently,
 $g:=f- {\rm  div}_\pw \vvh\in  P_{p-1}(\T(\vz))/\R $ satisfies $\Lambda g=\mathbf{0}$.
Proposition~\ref{propGuzman-Scott} applies to any $g|_T$ for $T\in\T(\vz)$
and leads to some $\vb\in\B_p(\T(\vz))$ with ${\rm  div}_\pw\vb=g$
 in $\omega(\vz)$. Hence $f={\rm  div}_\pw( \vvh+\vb)$.
\end{proof}

\section{Cubic polynomials and geometry of an interior patch}
\label{sectionInteriorpatch}
Figure~\ref{fig:nodal_patch} displays the closed nodal patch of an interior  vertex $\vz$ in the shape-regular triangulation
$\T$, sometimes also called star,
that consists of $m\ge 3$ triangles $\T(\vz)=\{T(1),\dots,T(m)\}$ with
interior vertex $\vz$ and the vertices $\vP(1),\dots, \vP(m)$ on the boundary
$\partial\omega(\vz)$; $\T(\vz)$ is a regular triangulation of  the open and simply connected domain
$\omega(\vz)\equiv \text{\rm int}(\cup\T(\vz))$ covered by the neighbouring
triangles  $T(1),\dots,T(m)$ around $\vz$. A typical triangle $T(j)$ has the vertices $\vz$, $\vP(j)$,
and $\vP(j+1)$ counted counterclockwise as in Figure~\ref{fig:nodal_patch} such that
$\partial T(j-1)\cap \partial T(j)=E(j)=\text{\rm conv}\{\vz,\vP(j)\}$ is a common
edge of the consecutive triangles $T(j-1)$ and $T(j)$ for all $j=1,\dots, m$;
here and throughout a cyclic notation  is understood with $T(0)\equiv T(m)$,
$\vP(0)\equiv \vP({m})$, etc.
The vertex-oriented $P_{1}$ nodal basis functions $\varphi_{z}\in P_{1}\left(
\mathcal{T}\left(  \mathbf{z}\right)  \right)  \cap H_{0}^{1}\left(
\omega\left(  \mathbf{z}\right)  \right)  $ and $\varphi_{1},\ldots
,\varphi_{m}\in P_{1}\left(  \mathcal{T}\left(  \mathbf{z}\right)  \right)
\cap H^{1}\left(  \omega\left(  \mathbf{z}\right)  \right)  $ are the nodal
basis functions defined by $\varphi_{j}\left(  \mathbf{P}_{k}\right)
=\delta_{jk}$ (with Kronecker's $\delta_{jk}$) and $\varphi_{j}\left(
\mathbf{z}\right)  =0=\varphi_{z}\left(  \mathbf{P}_{k}\right)  $ for all
$j,k=1,\ldots,m$, while $\varphi_{z}\left(  \mathbf{z}\right)  =1$.
The  Crouzeix-Raviart  function for  the edge $E(j)$
is defined with the cubic Legendre polynomial $\Le_3$ by
\begin{equation}\label{eqcccpsiinubicsection}
\psi_j(x):= \Le_3( 1-2\varphi_k(x))\quad\text{at }x\in T(j)\text{ with }k=j+1
\end{equation}
 resp.\  at $x\in T(j-1)$  with $k=j-1 $
and zero elsewhere.
Then the  subspace $\vV(\vz)$ is
defined as the span of all the following $5m$ vector-valued functions
\begin{equation}\label{eqcccubicsection1}
\varphi_j\varphi_z^2 \, \vn(j),\hfil
\varphi_j\varphi_z^2 \, \vt(j),\hfil
\varphi_j^2\varphi_z\, \vn(j),\hfil
\varphi_j^2\varphi_z \, \vt(j),\hfil
\psi_j\, \vn(j)\in  \CR^p_0(\T(\vz);\R^2),
\end{equation}
also enumerated as
$\vb(5j-4):=\varphi_j\varphi_z^2 \, \vn(j), \dots,  \vb(5j):=\psi_j\, \vn(j)$,
where
 $\vn(j)$ is the unit normal and  $\vt(j):=(\vz-\vP(j))/|E(j)|$  the tangential unit vector
of the edge $E(j)=\text{\rm conv}\{\vz,\vP(j)\}$ of  length  $|E(j)|=|\vz-\vP(j)|>0$
with number $j=1,\dots,m$ as depicted in Figure~\ref{fig:nodal_patch}.

Recall the
definition of $\La_T$ from  Section~\ref{sectionEliminationofinteriordegreesoffreedom} for any $T\in\T(\vz)$ and enumerate all those
$4m$ components also by $(\Lambda_{4j-3},\dots,\Lambda_{4j}) := \La_{T(j)}$ for
$j=1,\dots,m$.
The resulting Vandermonde matrix
\begin{equation}\label{eqcccubicsection2matrixM}
\bM= (\La_j(\ddiv_\pw \vb(k)))_{k=1,\dots,5m}^{j=1,\dots,4m}\in \R^{5m\times 4m}
\end{equation}
will be calculated explicitly in the next section.
The following duality argument allows an application of  Lemma~\ref{lemma:ccsas1}.
Recall  $\bM\vs=\mathbf{0}$  from \eqref{eqccdefofs}-\eqref{eqccorthogonalityofs}. 

\begin{lemma}\label{lemma:ccsas2}
If the kernel 
of the matrix $\bM$ is the span of $\vs$, then
 the linear map  $\Lambda\circ {\rm  div}_\pw: \vV(\vz)\to \vs^\perp$ is surjective
onto  $\vs^\perp\subset \R^{4m}$.
\end{lemma}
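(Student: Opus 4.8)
The plan is to read the hypothesis as a purely linear-algebraic statement about the matrix $\bM$ of \eqref{eqcccubicsection2matrixM} and to transport it to the operator $\Lambda\circ\ddiv_\pw$ through the distinguished generators of $\vV(\vz)$. Since the $5m$ functions in \eqref{eqcccubicsection1}, enumerated $\vb(1),\dots,\vb(5m)$, span $\vV(\vz)$, every $\vvh\in\vV(\vz)$ can be written $\vvh=\sum_{k=1}^{5m}c_k\vb(k)$ with $c=(c_1,\dots,c_{5m})\in\R^{5m}$, and every such $c$ is attained. By construction the $k$-th row of $\bM$ is the vector $\Lambda(\ddiv_\pw\vb(k))\in\R^{4m}$, so by linearity $\Lambda(\ddiv_\pw\vvh)=\bM^\top c$. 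Hence the range of $\Lambda\circ\ddiv_\pw\colon\vV(\vz)\to\R^{4m}$ equals $\{\bM^\top c:c\in\R^{5m}\}$, that is, the row space of $\bM$; and \eqref{eqccorthogonalityofs} (equivalently $\bM\vs=\mathbf 0$) records that this row space is contained in $\vs^\perp$.

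First I would invoke the elementary fact that for a real matrix the row space is the orthogonal complement of the kernel: the row space of $\bM$ equals $(\ker\bM)^\perp$, where $\ker\bM=\{\vx\in\R^{4m}:\bM\vx=\mathbf 0\}$. By hypothesis $\ker\bM=\sspan\{\vs\}$, which is one-dimensional because $\vs\neq\mathbf 0$; therefore the row space of $\bM$ is exactly the $(4m-1)$-dimensional subspace $\vs^\perp$. Combined with the previous paragraph this yields $\operatorname{range}(\Lambda\circ\ddiv_\pw)=\vs^\perp$, which is precisely the asserted surjectivity onto $\vs^\perp$. Alternatively one may argue by dimension count: the inclusion $\operatorname{range}(\Lambda\circ\ddiv_\pw)\subseteq\vs^\perp$ is already known, rank--nullity applied to $\bM$ gives $\operatorname{rank}\bM=4m-1$, this is the dimension of the row space and hence of the range, and $\dim\vs^\perp=4m-1$ forces equality.

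I do not expect a genuine obstacle in this lemma itself: it is a one-step duality argument whose sole purpose is to recast the kernel hypothesis on $\bM$ into the surjectivity needed to apply Lemma~\ref{lemma:ccsas1}. The real work lies in the subsequent sections, where $\bM$ is written down explicitly and where the genuinely nontrivial assertion $\ker\bM=\sspan\{\vs\}$ --- equivalently, that the constant pressure is the only kernel vector --- must be established; the present statement merely performs the bookkeeping that turns that kernel computation into the crux of the whole argument.
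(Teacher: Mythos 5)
Your argument is correct and follows essentially the same route as the paper: both identify the range of $\Lambda\circ\ddiv_\pw$ on $\vV(\vz)$ with $\im\bM^t$ (the row space of $\bM$) and then invoke the fundamental theorem of linear algebra, $(\ker\bM)^\perp=\im\bM^t$, to conclude from the hypothesis $\ker\bM=\sspan\{\vs\}$ that the range is $\vs^\perp$. Your alternative dimension count via rank--nullity is a minor stylistic variant of the same idea and is likewise fine.
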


\begin{proof}
Notice that $\bM$ is the already the transpose of the coefficient matrix
$\bM^t$ $=(\La(\ddiv_\pw \vb(k)):k=1,\dots,5m)$
associated to the operator  $\Lambda\circ {\rm  div}_\pw: \text{\rm span}\{  \vb(1),\dots, \vb(5m)\}
\to \vs^\perp$ and the images $\im(\Lambda\circ {\rm  div}_\pw)=\im \bM^t$ coincide.
The known orthogonal decomposition  $\ker \bM\operp \im \bM^t=\R^{4m}$
into the kernel  $\ker \bM$ of the matrix $\bM$ and the image
 $\im \bM^t$ of its transpose $\bM^t$
is  the fundamental  theorem of linear algebra \cite{GStrang} included in textbooks  on linear algebra, e.g., in the form
$\ker\bM=(\im\bM^t)^\perp$ in \cite[Eq. (2.47)]{Greub}.
With $\ker\bM=\sspan\{ \vs\}$ this implies  the assertion $\im(\Lambda\circ {\rm  div}_\pw)=\im \bM^t=\vs^\perp$.
\end{proof}

This section  concludes with  general observations on the geometry
of an interior patch. The inner angles of the triangle  $T(j)$ at the respective vertices $\vz$, $\vP(j)$,
and $\vP({j+1})$ are denoted by $\omega(j)$, $\alpha(j)$, and $\beta(j)$.
The area $|T(j)|$ of the triangle $T(j)$
and the length $|E(j)|$ of an edge $E(j)$ displayed in Figure~\ref{fig:nodal_patch} are related to certain cotangent sums;
namely, for $j=1,\dots,m$,
\begin{align*}
\gamma_j^-&:=\frac{  |E(j)|^2 }{2\,|T(j-1)|}=\cot\omega(j-1)+\cot\beta(j-1)>0,
\\
\gamma_j^+&:=\frac{  |E(j)|^2 }{2\,|T(j)|}=\cot\omega(j)+\cot\alpha(j)>0.
\end{align*}
(The cyclic notation applies for
$\omega(0)\equiv \omega(m)$ and $\beta(0)\equiv\beta(m)$;
the above geometric identities in terms of angles in a triangle follow
from elementary geometry in a triangle.)
The combination of the above
leads to
\begin{equation}\label{eqngeometry1}
\gamma_j:=\gamma_j^-+\gamma_j^+=\kappa_j+\mu_j>0
\end{equation}%
with real coefficients
\[
\kappa_j:=\cot \alpha(j)+\cot \beta({j-1})
\quad\text{and}\quad
\mu_j:=\cot \omega(j-1)+\cot \omega({j}) .
\]
The cotangent sums $\kappa_j$ and $\mu_j$ may become zero or negative;  but
$\kappa_j+\mu_j>0$ is positive{\color{blue}  :} the two quantities cannot vanish simultaneously.

A final observation concerns the  situation $\kappa_j=0$ that may happen
repeatedly. But the convex hull of the compact patch $\overline{\omega(\vz)}$
contains at least three extreme points and those are
convex corners of $\omega(\vz)$.  Hence there exist at least three vertices $\vP(j)$ with $\kappa_j>0$.

\section{The coefficient matrix}
\label{sectionThecoefficientmatrix}
This section derives an explicit closed formula for the Vandermonde matrix $\bM$ from \eqref{eqcccubicsection2matrixM} in
terms of a cyclic block-bi-diagonal matrix
\begin{equation}\label{eqcccubicsection4matrixM}
\bM=\begin{pmatrix} \bM_1^+ &&& \bM_1^- \\
 \bM_2^- &  \bM_2^+ \\
 &\ddots&\ddots \\
 && \bM_m^- & \bM_m^+
\end{pmatrix}\in \R^{5m\times 4m}.
\end{equation}
Empty positions in \eqref{eqcccubicsection4matrixM} are filled by zero and  $ \bM_j^\pm$ is a  $5\times 4$ matrix computed in the subsequent lemma,
where the  $5\times 8$ matrix  $( \bM_j^- , \bM_j^+)$ denotes the composition of $ \bM_j^-$ and $ \bM_j^+$.

\begin{lemma}\label{lemma:ccsas3}
The Vandermonde matrix $\bM$ 
is the  block-diagonal matrix \eqref{eqcccubicsection4matrixM}
with the typical $5\times 8$ block $( \bM_j^- , \bM_j^+)$ for $j=1,\dots,m$.
The diagonal scaling
\begin{align*}
\bD_L(j)&:=\text{\rm diag}(|E(j)|,-|E(j)|,|E(j)|,|E(j)|,|E(j)|/12) \in\R^{5\times 5}, \\
 \bD_R(j)&:=
\text{\rm diag}(1,1,1, 6/|T(j-1)|,1,1,1, 6/|T(j)|)\in\R^{8\times 8},
\end{align*}
leads to the $5\times 8$
matrix $\bD_L(j)( \bM_j^- , \bM_j^+) \bD_R(j)$ that is  equal to
\[
\begin{pmatrix}
 \cot\omega(j-1) \hspace{-1mm} & 0 & 0 &\gamma_j^-
         & -\cot\omega(j)  \hspace{-1mm} & 0 & 0 & - \gamma_j^+  \\
1 & 0 & 0 & 0 & 1 & 0 & 0 & 0 \\
0 & 0 & \hspace{-1mm} \cot\beta(j-1) \hspace{-1mm}  &\gamma_j^- &0
   & \hspace{-1mm} -\cot\alpha(j)  \hspace{-1mm}& 0 & - \gamma_j^+ \\
0 & 0 & 1  & 0 & 0 & 1 & 0 & 0 \\
 \gamma_j^- & \gamma_j^- & \gamma_j^- & \gamma_j^-  &
 - \gamma_j^+&- \gamma_j^+&- \gamma_j^+&- \gamma_j^+
\end{pmatrix}.
\]
\end{lemma}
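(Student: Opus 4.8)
The plan is to compute the $4\times 5$ action of $\La_{T(j)}\circ\ddiv$ (and the analogous contribution from the neighbouring triangle $T(j-1)$) on each of the five generating functions in \eqref{eqcccubicsection1}, triangle by triangle, and then read off the block structure. First I would observe that each basis function $\vb(5j-4),\dots,\vb(5j)$ is supported on the two triangles $T(j-1)\cup T(j)$ sharing the edge $E(j)$, so it contributes only to the rows indexed by $\La_{T(j-1)}$ and $\La_{T(j)}$; this already forces the cyclic block-bidiagonal shape \eqref{eqcccubicsection4matrixM}, with $\bM_j^-$ collecting the $T(j-1)$-contributions and $\bM_j^+$ the $T(j)$-contributions of the five functions attached to $E(j)$. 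Note the reindexing: the $j$-th block row of $\bM$ gathers the degrees of freedom $\La_{T(j)}$ acting on the functions attached to $E(j)$ (via $\bM_j^+$, scaled by $\bD_R$'s second block) and those attached to $E(j+1)$; after relabelling this is the stated $(\bM_j^-,\bM_j^+)$ pattern.

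Next I would carry out the pointwise divergence computations on a single triangle $T=T(j)$ with vertices $\vz,\vP(j),\vP(j+1)$. The key inputs are: (i) $\nabla\varphi_z$ and $\nabla\varphi_k$ are constant on $T$, with $\nabla\varphi_k$ the inward-pointing vector of magnitude $1/\mathrm{dist}$, so that $|T|\,\nabla\varphi_k$ is a rotated edge vector — this is where the areas $|T(j)|$ and $|T(j-1)|$ enter and produce the $6/|T|$ scalings in $\bD_R$ and the factor $1/12$ in $\bD_L$ (from $\int_T\Le_3(1-2\varphi_k)\,\ldots$, using $\Le_3(t)=\tfrac12(5t^3-3t)$ and the standard moments $\int_T\varphi_1^{a}\varphi_2^{b}\varphi_3^{c}\,dx=\tfrac{2|T|\,a!b!c!}{(a+b+c+2)!}$); (ii) the evaluations at the three vertices of $\ddiv(\varphi_j\varphi_z^2\vn)$ etc.\ vanish except where exactly the right power pattern survives — e.g.\ $\ddiv(\varphi_j\varphi_z^2\,\vv)=(\varphi_z^2\nabla\varphi_j+2\varphi_j\varphi_z\nabla\varphi_z)\cdot\vv$ is a quadratic that at $\vz$ equals $\nabla\varphi_j\cdot\vv$ (since $\varphi_z(\vz)=1$, $\varphi_j(\vz)=0$) and vanishes at $\vP(j),\vP(j+1)$; similarly $\ddiv(\varphi_j^2\varphi_z\vv)$ picks out $\vP(j)$. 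The normal/tangential split of $\vv\in\{\vn(j),\vt(j)\}$ against the constant gradients $\nabla\varphi_k$ then yields exactly the cotangent entries $\cot\omega,\cot\alpha,\cot\beta$ via the geometric identities for $\gamma_j^\pm$ already recorded in Section~\ref{sectionInteriorpatch}; the diagonal left scaling $\bD_L(j)$ with the factors $\pm|E(j)|$ is precisely what clears the $1/|E(j)|$ coming from the definition $\vt(j)=(\vz-\vP(j))/|E(j)|$ and normalizes the vertex-evaluation rows to have a clean $1$. For the bubble function $\psi_j\vn(j)$ one uses that $\psi_j$ vanishes at all three vertices of $T(j)$ (Gauss-point/Legendre structure), so its only nonzero $\La$-component is the integral, giving the last row $(\gamma_j^-,\gamma_j^-,\gamma_j^-,\gamma_j^-\mid -\gamma_j^+,\dots)$ after scaling — here $\int_{T(j)}\ddiv(\psi_j\vn(j))\,dx=\int_{E(j)}\psi_j\,ds$ reduces to a one-dimensional Legendre integral producing the uniform $\gamma_j^+$ pattern.

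Finally I would assemble: for each fixed $j$, the function attached to $E(j)$ lives on $T(j-1)\cup T(j)$, so I get its $T(j-1)$-column block from the computation above with the roles of the angles at the two endpoints exchanged (this swaps $\alpha\leftrightarrow\beta$ and $\omega(j)\leftrightarrow\omega(j-1)$ and flips a sign because $\vn(j)$ points out of $T(j-1)$ into $T(j)$), matching the displayed matrix with its $-\cot\omega(j)$, $-\cot\alpha(j)$, $-\gamma_j^+$ entries on the right half. Reading the five rows against the eight columns (four for $T(j-1)$, four for $T(j)$, in the order vertex-$\vz$, vertex-$\vP(j)$, vertex-$\vP(j+1)$, integral) then reproduces exactly the asserted $5\times 8$ block $\bD_L(j)(\bM_j^-,\bM_j^+)\bD_R(j)$.

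The main obstacle I anticipate is purely bookkeeping: keeping the vertex-ordering consistent across the two triangles sharing $E(j)$, tracking the sign of $\vn(j)$ relative to each triangle, and not confusing the local block index with the edge index in the cyclic relabelling $T(0)\equiv T(m)$. The analytic content — constant gradients of barycentric coordinates, the cotangent identities, and the exact Legendre moments over a triangle — is elementary; the only place a genuine small computation is unavoidable is the integral of $\ddiv$ of the cubic bubble and of the $\varphi_j\varphi_z^2$-type functions, where the normalization constants $6/|T|$ and $1/12$ must be pinned down carefully.
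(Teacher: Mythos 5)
Your overall strategy matches the paper's: compute the $\La_T$-functionals of the piecewise divergence of each of the five generating functions, exploit the two-triangle support to force the cyclic block shape, and use the cotangent identities and the left/right diagonal scalings to clean up. The first four rows are handled correctly in your sketch. However, the treatment of the fifth row (for $\psi_j\vn(j)$) contains a genuine error that contradicts the very matrix you are trying to reproduce.

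You claim that ``$\psi_j$ vanishes at all three vertices of $T(j)$ \dots\ so its only nonzero $\La$-component is the integral.'' This is false on two counts. First, $\psi_j|_{T(j)}=\Le_3(1-2\varphi_{j+1})$ takes the values $\Le_3(1)=1$ at $\vz$ and at $\vP(j)$, and $\Le_3(-1)=-1$ at $\vP(j+1)$; the zeros of $\Le_3$ are the Gauss points in the open edge, not the endpoints. Second, the functional $\La_T$ acts on $\ddiv(\psi_j\vn(j))=\vn(j)\cdot\nabla\psi_j$, not on $\psi_j$ itself, so vertex values of $\psi_j$ are irrelevant anyway. Had the vertex evaluations been zero, the last row would read $(0,0,0,\gamma_j^-,0,0,0,-\gamma_j^+)$ rather than the uniform row $(\gamma_j^-,\gamma_j^-,\gamma_j^-,\gamma_j^-,-\gamma_j^+,\dots)$ asserted in the lemma, and the kernel analysis in Section~\ref{sectionThekernelofbAisone-dimensional} --- which relies on those nonzero vertex entries --- would collapse. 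The missing computation is precisely the one that explains the fifth entry $|E(j)|/12$ of $\bD_L(j)$: at any vertex of $T(j)$ one has $\vn(j)\cdot\nabla\psi_j=-2\Le_3'(\pm 1)\,\vn(j)\cdot\nabla\varphi_{j+1}$ with $\Le_3'(\pm 1)=6$, and $\nabla\varphi_{j+1}|_{T(j)}=\vn(j)\,|E(j)|/(2|T(j)|)$, giving the constant value $-12\gamma_j^+/|E(j)|$ (resp.\ $+12\gamma_j^-/|E(j)|$ in $T(j-1)$). That factor $12=2\Le_3'(1)$ is what the $1/12$ in $\bD_L$ clears; tying $1/12$ to barycentric moments over a triangle, as you do, is the wrong provenance. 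Finally, $\int_{T(j)}\ddiv(\psi_j\vn(j))\,dx=-|E(j)|$, not $+\int_{E(j)}\psi_j\,ds$: the sign comes from the exterior normal on $E(j)$, and the contributions from the two other edges of $T(j)$ vanish only because $\int_{-1}^{1}\Le_3\,dt=0$, a nontrivial cancellation your sketch elides.

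A smaller slip: your sentence ``the $j$-th block row of $\bM$ gathers the degrees of freedom $\La_{T(j)}$ acting on the functions attached to $E(j)$ \dots\ and those attached to $E(j+1)$'' describes a block \emph{column}, not a block row; in \eqref{eqcccubicsection4matrixM} the $j$-th block row collects the five functions attached to $E(j)$ paired with the functionals $\La_{T(j-1)}$ (in $\bM_j^-$) and $\La_{T(j)}$ (in $\bM_j^+$). Likewise the column order ``$\vz,\vP(j),\vP(j+1),\int$'' is correct only for the $T(j)$-block; for the $T(j-1)$-block it is $\vz,\vP(j-1),\vP(j),\int$.
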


(Recall the cyclic convention  $(\omega(0),\beta(0))\equiv
(\omega(m),\beta(m))$ for $j=1$.)

\begin{proof}
This outline of the  proof guides the reader through the  arguments from elementary geometry in triangles. %
The functionals in $\La$ from
Section~\ref{sectionEliminationofinteriordegreesoffreedom}
consist of point evaluation or integrals of the piecewise divergence
of the ansatz  functions
$\vb(5j-4)\equiv\varphi_j\varphi_z^2 \, \vn(j)$, \dots,  $\vb(5j)\equiv\psi_j\, \vn(j)$.
The block structure of $\bM$
results from the support $T(j-1)\cup T(j)$ of those $5$ piecewise polynomials.

The first component of the functional $\La$ associated to the triangle $T(j)$ is the point evaluation at the vertex $\vz$  of a function
$\ddiv(\varphi_j\varphi_z^2 \,\vv )=\vv\cdot\nabla(\varphi_j\varphi_z^2)$
resp.\  $\ddiv(\varphi_j^2\varphi_z \,\vv )$
for some constant vector $\vv\in\R^2$ and this is equal to  $\vv\cdot\nabla\varphi_j|_{T(j)}$
resp.\  zero.
The (global) nodal basis functions $\varphi_z$ and
$\varphi_j$ are barycentric coordinates in each triangle and their derivatives are known from the geometric
fundamentals of the (conforming) $P_1$ FEM:
 $|E(j)|\, \vn(j)\cdot \nabla\varphi_z$ and  $|E(j)|\, \vn(j)\cdot \nabla\varphi_j$ are equal to
$-\cot\alpha(j)$ and $-\cot \omega(j)$ in $T(j)$  (resp.\  $\cot\beta(j-1)$ and $\cot \omega(j-1)$ in $T(j-1)$).
The tangential components $|E(j)|\, \vt(j)\cdot \nabla\varphi_z=1$
and $|E(j)|\, \vt(j)\cdot \nabla\varphi_j=-1$ are constant in $T(j-1)\cup T(j)$.
 This leads to the entries $M_j^-(1,1)=|E(j)|^{-1}\, \cot\omega(j-1)$ and  $M_j^+(1,1)=-|E(j)|^{-1}\, \cot\omega(j)$
in the first position of $M_j^\pm\in \R^{5\times 4}$ and to $M_j^-(2,1)=-|E(j)|^{-1}=M_j^+(2,1)$.

Similar arguments apply to the point evaluations at the
other vertices $\vP(j)$ and $\vP({j+1})$ and we obtain, e.g.,
$M_j^-(3,3)=|E(j)|^{-1}\, \cot\beta(j-1)$, $M_j^+(3,2)=-|E(j)|^{-1}\, \cot\alpha(j)$, and  $M_j^-(4,3)=|E(j)|^{-1}=M_j^+(4,2)$.

The situation is slightly  different for the edge-bubble
function $\psi_j$ from \eqref{eqcccpsiinubicsection}: The one-dimensional
derivative $\Le'_3(\pm1)=6$   of the cubic Legendre polynomial $\Le_3(t)=(5t^3-3t)/2$
is the same at the end-points of the reference interval $[-1,1]$. Thus  the
point evaluations of  $\ddiv (\psi_j(x)\,\vn(j))=\vn(j)\cdot \nabla\psi_j(x)=
-2\Le'_3(1-2\varphi_k(x)) \vn(j)\cdot  \nabla\varphi_k$ at the three vertices of $T(j-1)$ (resp.\  $T(j)$) coincide with
$-12 \,\vn(j)\cdot  \nabla\varphi_k$.
Since  $\varphi_{j+1}$ vanishes along $E(j)$, its gradient
$\nabla\varphi_{j+1}|_{T(j)}=\vn(j)/h_{j,j+1}$
is perpendicular to $\vt(j)$, i.e.,  parallel to $\vn(j)$, and has a length $h_{j,j+1}^{-1}$, where
 $h_{j,j+1}=2\,|T(j)|/|E(j)|$ is the height of the vertex $\vP(j+1)$  onto the edge $E(j)$
in the triangle $T(j)$.
Consequently   $\ddiv (\psi_j\,\vn(j))$ assumes
the value    $ 6\,|E(j)|/|T(j-1)|=12\,\gamma_j^-/|E(j)|$  at any vertex of $T(j-1)$
(resp.\   the value $ -6\,|E(j)|/|T(j)|=-12\,\gamma_j^+/|E(j)|$  at any vertex of $T(j)$).
Given any vector $\vv\in\R^2$, the integrals over $T(j)$ allow
an integration by parts
\begin{align*}
&\int_{T(j)} \ddiv(\varphi_j\varphi_z^2 \,\vv )dx
=\vv \cdot \int_{\partial T(j)} \varphi_j\varphi_z^2\vn_{T(j)} dx \\
&\quad = - \vv\cdot\vn(j)\,\int_{E(j)}  \varphi_j\varphi_z^2ds=-\vv\cdot\vn(j) {|E(j)|}/12
\end{align*}
(with the exterior unit normal $\vn_{T(j)} $ 
and  $\int_0^1 s^2(1-s)\, ds=1/12$).  The same argument applies to
$-\int_{T(j)} \ddiv(\psi_j \,\vn(j))dx$ $=|E(j)|=\int_{T(j-1)} \ddiv(\psi_j \,\vn(j))dx$.

The summary of the aforementioned identities leads to the assertion.
\end{proof}

Lemma~\ref{lemma:ccsas3}
suggests a global scaling of the matrix $\bM$ with the block-diagonal matrices
\begin{align*}
 \bD_L&:=\text{\rm diag}(\bD_L(1),\dots,\bD_L(m))\in \R^{5m\times 5m} ,\\
\bD_R&:=\text{\rm diag}( \bD_R'(1),\dots,\bD_R'(m))\in \R^{4m\times 4m}
\end{align*}
with
$ \bD_L(j) \in\R^{5\times 5}$  as above and
$\bD_R'(j):=\text{\rm diag}(1,1,1, 6/|T(j)|)\in\R^{4\times 4}$.
The investigation of the kernel of $\bM$ is then rewritten in that of
the cyclic block-bi-diagonal matrix
\begin{equation}\label{eqcccubicsection4matrixA}
\bA:= \bD_L \bM\bD_R =\begin{pmatrix} \bA_1^+ &&&  \bA_1^- \\
 \bA_2^- &  \bA_2^+ \\
 &\ddots&\ddots \\
 && \bA_m^- & \bA_m^+
\end{pmatrix}\in \R^{5m\times 4m}
\end{equation}
with the blocks $ (\bA_j^-,\bA_j^+):=\bD_L(j)( \bM_j^- , \bM_j^+) \bD_R(j)$
displayed in Lemma~\ref{lemma:ccsas3}.

A brief observation
on the constant pressure values concludes
this section.
Recall  $\bM\vs=\mathbf{0}$  from \eqref{eqccdefofs}-\eqref{eqccorthogonalityofs} so that
\[
\vv_0:=6\bD_R^{-1}\vs= (0,0,0,|T(1)|,0,0,0,|T(2)|,\dots,0,0,0 ,|T(m)|)^t\in \R^{4m}
\]
belongs to the kernel of $\bA$. If the dimension of the kernel of $\A$
is smaller than or equal to one, then it is one (because $\A\vv_0=\mathbf{0}$) and
the kernel  $\bM$ is  $\text{\rm span}\{\vs\}$ and
Lemma~\ref{lemma:ccsas2} applies. The proof that the kernel of $\A$ has
dimension at most one is the technical key and the content of Section~\ref{sectionThekernelofbAisone-dimensional}.

\section{The kernel of $\bA$ is one-dimensional}
\label{sectionThekernelofbAisone-dimensional}
The  characterization of the kernel of $\A$ follows in three
steps. The {\em first step} reduces the problem to a sub-matrix and concerns  the row number two, four, and five of
$\bA_j^-$ and $\bA_j^+$  for each $j=1,\dots,m$; i.e.,
\begin{equation}\label{eqcccubicsection5matrixB}
\bB:=\bA(2,4,5,7,9,10,\dots,5m;1 ,\dots, 4m)\in\R^{3m\times 4m}
\end{equation}
with the $3m$ rows of number $  k=5j-3,5j-1,5j$ for $j=1,\dots,m$. (Standard notation for sub-matrices from matrix analysis applies
throughout this paper: $A(I;J)$ denotes the (rectangular) sub-matrix of $A$ with the entries $A(i,j)$ for $(i,j)\in I\times J$ specified in the index lists $I$ and $J$.)
Let  $\ve_j$ be the $j$-th canonical  unit vector in $\R^{4m}$.

Define the  
vectors $\vv_0,\dots,\vv_{m+1}\in \R^{4m}$  by
\begin{align*}
\vv_0&\equiv  \sum_{j=1}^{m} |T(j)|\, \ve_{4j}, \quad
\vv_{1}:=-\ve_{2}+\ve_{4}+\ve_{4m-1}-\ve_{4m} ,\\
\vv_j &:= \ve_{4j-5}-\ve_{4j-4}-\ve_{4j-2}+\ve_{4j} \text{ for any  }
j=2,\dots, m,\\
\vv_{m+1}&:=\sum_{j=1}^m (-1)^{j+1}( \ve_{4j-3}-\ve_{4j}).
\end{align*}

\begin{lemma}\label{lemma:ccsas4}
 The kernel of $\bB$ from  \eqref{eqcccubicsection5matrixB} has the dimension  $m+1+\sigma $
with $\sigma=0$ if $m=3,5,7,\dots $ is odd and $\sigma=1$ if $m=4,6,8,\dots$ is even.
The vectors $\vv_0,\dots,\vv_{m+\sigma}\in \R^{4m}$ form
a basis of the kernel $\ker\B$ of $\bB$.
\end{lemma}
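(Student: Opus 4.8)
The plan is to read the three families of scalar equations that define $\ker\bB$ off Lemma~\ref{lemma:ccsas3}, to solve them in three stages which match the three families of claimed basis vectors, and then to verify the basis statement directly. By Lemma~\ref{lemma:ccsas3}, rows two, four and five of the $5\times 8$ block $(\bA_j^-,\bA_j^+)$ are $(1,0,0,0\mid 1,0,0,0)$, $(0,0,1,0\mid 0,1,0,0)$ and $(\gamma_j^-,\gamma_j^-,\gamma_j^-,\gamma_j^-\mid-\gamma_j^+,-\gamma_j^+,-\gamma_j^+,-\gamma_j^+)$, so $\bB$ inherits the cyclic block-bi-diagonal shape of $\bA$. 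Writing $a_j:=x_{4j-3}$, $b_j:=x_{4j-2}$, $c_j:=x_{4j-1}$, $d_j:=x_{4j}$ and $S_j:=a_j+b_j+c_j+d_j$ for $\vx=(x_1,\dots,x_{4m})\in\R^{4m}$ (cyclically, $a_0\equiv a_m$, etc.), I would first observe that $\bB\vx=\mathbf 0$ is equivalent to
\[
a_{j-1}+a_j=0,\qquad c_{j-1}+b_j=0,\qquad \gamma_j^-\,S_{j-1}=\gamma_j^+\,S_j\qquad(j=1,\dots,m).
\]

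Next I would solve this in three stages. The first family is a cyclic sign-recursion giving $a_j=(-1)^{j-1}a_1$, whose closure at $j=1$ reads $\bigl((-1)^{m-1}+1\bigr)a_1=0$; hence $a_1$ is free for even $m$ and forced to vanish for odd $m$, which is exactly where the index $\sigma$ comes from, the free direction being $\vv_{m+1}$. The second family merely prescribes $b_j=-c_{j-1}$, so $c_1,\dots,c_m$ are free; the corresponding $m$ directions, with the forced $b$- and $d$-entries built in, are $\vv_1,\dots,\vv_m$. After $a,b,c$ are fixed the third family is a cyclic bi-diagonal system for $d_1,\dots,d_m$: its homogeneous part $\gamma_j^-d_{j-1}=\gamma_j^+d_j$, together with the identity $\gamma_j^-/\gamma_j^+=|T(j)|/|T(j-1)|$ (immediate from the definitions of $\gamma_j^\pm$), forces $d_j/|T(j)|$ to be constant in $j$, i.e.\ the direction $\vv_0$; the cyclic product of these ratios telescopes to $1$, so this solution closes up, and the same telescoping shows the inhomogeneous system to be solvable for every admissible $a,b,c$. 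Counting the free parameters yields $\dim\ker\bB=m+1+\sigma$.

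For the basis statement I would argue directly rather than track the general solution. That each $\vv_i$ lies in $\ker\bB$ follows by substitution: for $\vv_1,\dots,\vv_{m+1}$ all sums $S_j$ vanish, so the third family is automatic, and the first family holds by the sign pattern — with the one subtlety that the cyclic closure for $\vv_{m+1}$ requires $m$ even, which is why $\vv_{m+1}\in\ker\bB$ precisely when $\sigma=1$; for $\vv_0$ the third family holds because $\gamma_j^-|T(j-1)|=\gamma_j^+|T(j)|=|E(j)|^2/2$. Conversely, given $\vx\in\ker\bB$, subtract $a_1\vv_{m+1}$ to annihilate the $a$-components (legitimate since $a_1=0$ for odd $m$ and $\vv_{m+1}\in\ker\bB$ for even $m$), then subtract $c_m\vv_1+\sum_{j=2}^m c_{j-1}\vv_j$ to annihilate the $b$- and $c$-components (this matches exactly because the only nonzero $b$- and $c$-entries of $\vv_j$ sit in the slots $b_j$ and $c_{j-1}$, with values $-1$ and $1$, while $\vx$ satisfies $b_j=-c_{j-1}$); the remainder then has only $d$-components obeying $\gamma_j^-d_{j-1}=\gamma_j^+d_j$, hence is a multiple of $\vv_0$. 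Linear independence is read off coordinatewise: $\vv_1,\dots,\vv_m$ carry pairwise distinct unit entries among the $c$-slots while $\vv_0$ and $\vv_{m+1}$ carry none, after which $\vv_{m+1}$ is singled out by its $a$-entry and $\vv_0\neq\mathbf 0$ concludes. Hence $\vv_0,\dots,\vv_{m+\sigma}$ is a basis of $\ker\bB$, of dimension $m+1+\sigma$.

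The proof involves no genuine difficulty; the one delicate point is bookkeeping — keeping all cyclic index shifts consistent and, above all, not missing that $\vv_{m+1}$ enters the basis only for even $m$. The summand $\sigma$ is precisely the even/odd closure condition of the $a$-recursion (equivalently, the extra kernel direction surviving the cyclic constraint), and it is this, rather than a blunt rank count, that must be tracked carefully.
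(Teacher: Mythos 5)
Your proof is correct and follows essentially the same route as the paper's: both exploit that $\bB\vx=\mathbf{0}$ decouples into the cyclic $a$-recursion $a_{j-1}+a_j=0$ (whose closure accounts for the summand $\sigma$), the relation $b_j=-c_{j-1}$ leaving $c$ free, and the cyclic $S$-recursion $\gamma_j^- S_{j-1}=\gamma_j^+ S_j$ giving the one remaining degree of freedom in $\vv_0$; both then check the listed vectors are in $\ker\bB$, establish independence by coordinate inspection, and finish by reducing an arbitrary kernel vector with subtractions. The only cosmetic differences are the order of the reduction (you kill the $a$-slots first, the paper last) and that the paper routes the $a$-recursion through the determinant $\det(\mathbf 1+\bF)=1-(-1)^m$ of a Frobenius companion matrix where you read off the closure condition $\bigl((-1)^{m-1}+1\bigr)a_1=0$ directly.
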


The proof of  Lemma~\ref{lemma:ccsas4} by straightforward linear algebra  will be postponed to
Section~\ref{secProofofLemmareflemma:ccsas4}
in order not to delay the flow of arguments in the proof.

\medskip

{\em Step two} of the proof recalls
that $\vv_0$ is a known kernel vector of $\A$ and so can and will  be neglected in the sequel.
The aim is a  characterization of all remaining kernel vectors
$\vx$ of $\A$ as a  linear combination
\begin{equation}\label{eqncruciala1}
\vx=\eta\,\vv_{m+1}+\sum_{j=1}^{m} \xi_j \vv_j \in \R^{4m}
\end{equation}
of the other kernel vectors $\vv_1,\dots,\vv_{m+\sigma}$
of $\B$ with real coefficients  $\xi_1,\dots,\xi_m$ and $\eta$ and the following convention:
$\eta:=0$ if $\sigma=0$ and $m=3,5,7,\dots $ is odd, while
$\eta\in\R$ if $\sigma=1$ and $m=4,6,8,\dots $ is even.
Suppose throughout  this section that  $\vx$ from \eqref{eqncruciala1} satisfies $\bA\vx=\mathbf{0}$.
The  claim of this section is that  $\vx=\mathbf{0}$.
Recall  $\bB\vx=\mathbf{0}$  by  Lemma~\ref{lemma:ccsas4} and consider the remaining equations
 $0=(\bA\vx)( 5j-4)$ and   $0=(\bA\vx)( 5j-2)$  of
$\A\vx\in \R^{5m}$ in the position  $ 5j-4 $ and $5j-2$
for each $j=1,\dots,m$, which are equivalent to
 \begin{align}\label{cceqn51a}
(-1)^{j} \kappa_j \eta &=  \gamma_j^-\xi_{j-1} -\gamma_j \,\xi_{j}
+ \gamma_j^+\xi_{j+1} ,\\
(-1)^{j} \gamma_j \eta&= \label{cceqn51b}
\gamma_j^-\xi_{j-1} -\mu_j\, \xi_{j}+ \gamma_j^+\xi_{j+1}
\end{align}
with \eqref{eqngeometry1} and the geometric terms from the end of Section~\ref{sectionInteriorpatch}.
(Follow the cyclic convention $\xi_0\equiv\xi_m$ and $\xi_{m+1}\equiv\xi_1$ where necessary.)
Define the convex coefficients
$0<\lambda_j:= \gamma_j^-/\gamma_j<1$ and
$0<1-\lambda_j= \gamma_j^+/\gamma_j<1$ to rewrite \eqref{cceqn51a}-\eqref{cceqn51b},
for all $j=1,\dots,m$, as
\begin{align}\label{eqncrucial1}
 \lambda_j\xi_{j-1} - \xi_{j}+  (1-\lambda_j)\xi_{j+1}
&= (-1)^{j}( \kappa_j/\gamma_j )\,  \eta ,\\   \label{eqncrucial2}
\lambda_j \xi_{j-1} -(\mu_j/\gamma_j) \xi_{j}+ (1-\lambda_j)\xi_{j+1}&=
(-1)^{j}  \eta.
\end{align}
The aim is to verify that those $2m$ equations
\eqref{eqncrucial1}-\eqref{eqncrucial2}
cannot hold simultaneously unless
all the real coefficients  $\xi_1,\dots,\xi_{m}$ and $\eta$ vanish. The indirect proof
assumes that $J:=\{ j=1,\dots,m: \xi_j\ne 0\}$ is non-empty and then leads
to a  contradiction. (Then $J=\emptyset$ and \eqref{eqncrucial2}
imply $\eta=0$ as well, whence $\vx=\mathbf{0}$.)
Let us fix the notation $J^C:=\{1,\dots,m\}\setminus J$ before we
distinguish two cases  $\eta=0$ and $\eta\ne 0$ in {\em step three}  of the proof.

\bigskip

{\em First case  $\sigma=0$ or  $\sigma=1$ with $\eta=0$.}
Then the right-hand sides in \eqref{eqncrucial1}-\eqref{eqncrucial2} vanish and the difference of the two equations shows
\[
0=(1-\mu_j/\gamma_j) \xi_{j}=(\kappa_j/\gamma_j) \xi_{j}
\quad\text{for all }j=1,\dots, m.
\]
This provides no new information for $j\in J^C$, but  it implies
\[
\kappa_j=0\quad\text{for all }j\in J.
\]
The geometry of the nodal patch of $\vz$ enters at this point as an additional argument.
Section~\ref{sectionInteriorpatch} closes with the existence
of at least three vertices $\vP(j)$ with $\kappa_j>0$,
whence $J^C$ contains at least three indices. The hypothesis $J\ne\emptyset$
leads to $m\ge 4$ and at least one index in $J$. Hence we can find $k$ consecutive indices
$j+1,\dots, j+k\in  J$  with  $j, j+k+1\in J^C$ --- interpreted
in a cyclic notation (with index $0$ identified with $m$ etc.).
Changing the enumeration
of the vertices (if necessary) we find $k\in \{1, \dots, m-3\}$ such that (in the new
enumeration) $\xi_m=0=\xi_{k+1} $ while  $\xi_1,\dots,\xi_k \in\R\setminus\{0\}$ do not vanish.
The homogeneous equation  \eqref{eqncrucial1} holds for $j=1,\dots, k$ and this can be written simultaneously
as a linear system of $k$ equations
\begin{equation}\label{eqncrucial4}
\begin{pmatrix}
1 & \lambda_1-1 \\
-\lambda_2 & 1 & \lambda_2-1 \\
& \ddots & \ddots &\ddots \\
&& -\lambda_{k-1} & 1 & \lambda_{k-1}-1 \\
&&&-\lambda_k & 1
\end{pmatrix}
\begin{pmatrix}
\xi_1\\
\xi_2\\
\vdots\\
\xi_{k-1}\\
\xi_k
\end{pmatrix}
=\mathbf{0}
\end{equation}
with a (possibly non-symmetric)  $k\times k$ tri-diagonal coefficient matrix. For $k=1$ and $k=2$,  \eqref{eqncrucial4}   exclusively allows the trivial solution
$(\xi_1,\dots,\xi_k)=\mathbf{0}$. For $k\ge 3$, the tri-diagonal matrix in  \eqref{eqncrucial4}
is irreducible and  weakly diagonally dominant,  but strictly diagonally dominant  in the first and last
row. Those matrices are  called  weakly chained diagonally dominant
and known to be regular; cf., e.g., \cite[Cor. 6.2.27]{hornjohnson} or  O.~Taussky's paper \cite{MR32557}.
It follows that  \eqref{eqncrucial4}   merely allows the trivial  solution
$(\xi_1,\dots,\xi_k)=\mathbf{0}$.  This contradicts  the assumption $1,\dots,k\in J$.
Hence $J=\emptyset$ and $\text{\rm ker}\,\bA=  \text{\rm span}\{ \vv_0 \}$. \qed 

\bigskip

{\em Second case  $\sigma=1$ and $\eta\ne 0$.}
Define the coefficients $\eta_j:=(-1)^{j} \xi_j/ \eta\in\R $ and
substitute
$\xi_j:=  (-1)^{j}\eta\,\eta_j$ in
\eqref{eqncrucial1}-\eqref{eqncrucial2}
for all $j=1,\dots,m$. This leads, for all $j=1,\dots, m$,
 to the two identities
\begin{align}\label{eqncrucial3}
-\lambda_j\eta_{j-1} -  (1-\lambda_j)\eta_{j+1}
=  \kappa_j/\gamma_j  +\eta_j = 1+(\mu_j/\gamma_j) \eta_{j}.
\end{align}
With \eqref{eqngeometry1} and the geometric notions from the end of Section~\ref{sectionInteriorpatch}, the second equality in
\eqref{eqncrucial3}  is equivalent to
\[
\gamma_j=\kappa_j(1+\eta_j)>0.
\]
Hence  $\eta_j\ne-1$ and $\kappa_j\ne 0 $
for all $j=1,\dots, m$. The first equality in
\eqref{eqncrucial3} can be rewritten as
$\lambda_j\eta_{j-1} + c_j\eta_j+(1-\lambda_j)\eta_{j+1}=d_j$
with
\[
c_j:=\begin{cases} 0 & \text{ if } \eta_j=0 ,\\
1+1/(\eta_j(1+\eta_j)) & \text{ if }\eta_j\ne 0\end{cases}
\quad\text{and}\quad
d_j:=\begin{cases} -1 & \text{ if } \eta_j=0 ,\\
0 & \text{ if }\eta_j\ne 0\end{cases}
\]
for any $j=1,\dots,m$. All those $m$  conditions simultaneously form
a system of linear equations
 \begin{equation}\label{eqncrucial2a}
 \bT(\eta_1,\dots,\eta_m)^t=\mathbf{d}:=(d_1,\dots,d_m)^t\in \R^m
 \end{equation}
 with the  right-hand side  $\mathbf{d}$ 
 and the cyclic tri-diagonal coefficient matrix
 \[
 \bT:=
\begin{pmatrix}
c_1 & 1-\lambda_1&&&\lambda_1 \\
\lambda_2 & c_2 & 1-\lambda_2 \\
& \ddots & \ddots &\ddots \\
&& \lambda_{m-1} & c_{m-1} &1- \lambda_{m-1} \\
1-\lambda_m &&&\lambda_m & c_m
\end{pmatrix} .
\]
The compact row-wise Gershgorin discs $\overline{B}(c_j,1)$ in the coefficient matrix
$\bT$
have the center $c_j$ and the radius one.
If  $j\in J$  and  so $0\ne \eta_j\ne -1$, then  $|c_j|>1$
(use $c_j=-1+\eta_j/(1+\eta_j)+(1+\eta_j)/\eta_j<-1$ in case $-1<\eta_j<0$).
Hence the Gershgorin disc $\overline{B}(c_j,1)\not\ni 0$ does {\em not} include zero.
A {\em first}  conclusion is that $J^C=\emptyset $ implies that $\bT$ is regular
(zero does not belong to any of the Gershgorin discs and so is not an eigenvalue)
and the right-hand side  $\mathbf{d}=\mathbf{0}$ vanishes.  Hence \eqref{eqncrucial3}
allows only the trivial solution and all coefficients $\eta_1,\dots,\eta_m$ vanish.
This  leads to $J=\emptyset$.
A {\em second} conclusion concerns  the remaining cases:  Suppose,
for a contradiction,  that
$J\ne \emptyset\ne J^C$ and delete all the rows and columns in the coefficient matrix
 $\bT$ and in the right-hand side $\mathbf{d}$  in \eqref{eqncrucial2a}
 with an index from $J^C$ to obtain the sub-matrix  $ \bT':=\bT(J,J) $.
Since $\eta_j$ vanishes for all $j\in J^C$ and $d_j=0$ for all $j\in J$, the  linear system
\eqref{eqncrucial2a} reduces to the homogeneous linear system
$\bT' \vy'=\mathbf{0}$ with the reduced coefficient vector $\vy':=(\eta_j:j\in J)$.
The row-wise compact Gershgorin discs $\overline{B}(c_j,r_j)$
for the cyclic tri-diagonal matrix $\bT'$ for any $j\in J$ have the center $c_j$ with
$|c_j|>1$ as before; but the radius $0\le r_j\le 1$ may be smaller than one in case
a corresponding row in $\bT$ has been deleted to obtain
$\bT'$. It follows that zero does
{\em not} belong to any  Gershgorin disc $\overline{B}(c_j,r_j)$ in $\bT'$ for $j\in J$ and so
$\bT'$ is a regular matrix. Thus the  reduced homogeneous system
$\bT' \vy'=\mathbf{0}$ 
allows only the trivial solution $\vy'=\mathbf{0}$; whence $J=\emptyset$.

In all cases,  $J=\emptyset$, but that means a contradiction in \eqref{eqncrucial3}. Hence the second case with $\eta\ne 0$
cannot arise.
 \qed

\section{Proof of  Lemma~\ref{lemma:ccsas4}}\label{secProofofLemmareflemma:ccsas4}
The submatrix $\B$ of $\A$ from  \eqref{eqcccubicsection5matrixB} is the cyclic block-bi-diagonal matrix
\begin{equation}\label{eqcccubicsection6matrixB}
\bB =\begin{pmatrix} \bB_1^+ &&&  \bB_1^- \\
 \bB_2^- &  \bB_2^+ \\
 &\ddots&\ddots \\
 && \bB_m^- & \bB_m^+
\end{pmatrix}\in \R^{3m\times 4m}
\end{equation}
with the $3\times 8$ blocks, for  $j=1,\dots,m$,
\begin{equation}\label{eqnewcctocalculate}
( \bB_j^- ,  \bB_j^+)=
\begin{pmatrix}
1 & 0 & 0 & 0 & 1 & 0 & 0 & 0 \\
0 & 0 & 1  & 0 & 0 & 1 & 0 & 0 \\
 \gamma_j^- & \gamma_j^- & \gamma_j^- & \gamma_j^-  &
 - \gamma_j^+&- \gamma_j^+&- \gamma_j^+&- \gamma_j^+
\end{pmatrix}.
\end{equation}
Recall $\vv_0,\dots,\vv_{m+\sigma}\in \R^{4m}$ from Lemma~\ref{lemma:ccsas4}
in terms of  the canonical unit vectors $\ve_1,\dots, \ve_{4m}$ in $\R^{4m} $.

\medskip

{\em Claim 1: $\vv_0,\dots,\vv_{m+\sigma}$ are kernel vectors of $\bB$.}
This follows from straightforward calculations
of the scalar product of the three rows of
$( \bB_j^- ,  \bB_j^+)$ from \eqref{eqnewcctocalculate} for $j=1,\dots, m$
with $\vv_k := \ve_{4k-5}-\ve_{4k-4}-\ve_{4k-2}+\ve_{4k} $ for
$k=2,\dots, m$. The first row of $( \bB_j^- ,  \bB_j^+)$ has two ones only at positions where  $\vv_k$ vanishes;
the second row matches for $j=k$ at two positions with $1-1=0$ (and no interaction otherwise). The third row  vanishes because the sum of all components of $\vv_k$
in the positions $4j-3, 4j-2, 4j-3, 4j$ vanish for each $j=1,\dots, m$.  Further details are not displayed here
with two exceptions. The first one 
is that
\(
|T(j-1)|\, \gamma_j^-=|T(j)|\, \gamma_j^+=|E(j)|^2/2
\)
and so $\B\vv_0=\mathbf{0}$,  a rewriting of \eqref{eqccdefofs}-\eqref{eqccorthogonalityofs}.
The second  detail is
$\bB\vv_{m+1}= ( 1-(-1)^{m}) \ve_1$ to
underline that  $\vv_{m+1}$ is a kernel vector of $\bB$
if and only if $m$ is even. \qed

\medskip

{\em Claim 2: $\vv_0,\dots,\vv_{m+1}$ are linearly independent.}
Suppose that $\xi_0,\dots,\xi_{m+1} $ are the real coefficients of the trivial
linear combination $\mathbf{0}=\sum_{j=0}^ {m+1} \xi_j\, \vv_j $. Three observations
complete the  proof that all coefficients  $\xi_0,\dots,\xi_{m+1} $ vanish.
The {\em first} one  is that the scalar product
$\vv_j\cdot (1,1,1,1,0,\dots,0)=0$ vanishes
for any $j=1,\dots,m+1$. This and  $\vv_0\cdot (1,1,1,1,0,\dots,0)=|T(1)|$ show
$0=(\sum_{j=0}^ {m+1} \xi_j\, \vv_j )\cdot (1,1,1,1,0,\dots,0)=\xi_0|T(1)|$;
whence $\xi_0=0$.

The {\em second} observation is the scalar product
 $\vv_{j}\cdot \ve_{1}=0$ vanishes for all $j=1,\dots,m$, while
 $\vv_{m+1}\cdot \ve_{1}=1$.
 Thus $\xi_{m+1} =(\sum_{j=1}^ {m+1} \xi_j\, \vv_j)\cdot \ve_1=0$.

The {\em third} observation is that the scalar
product of  $\vv_j$ with   $\ve_{4k-2}$ reads
\begin{equation}\label{eqcclemmaproofa1}
\vv_j\cdot \ve_{4k-2}=-\delta_{jk} \quad \text{for all } j,k=1,\dots, m .
\end{equation}
Hence  $0=(\sum_{j=1}^ {m} \xi_j\, \vv_j)\cdot \ve_{4k-2}=-\xi_k $
vanishes for all $k=1,\dots,m$. \qed

\medskip

{\em Claim 3: The kernel of $\bB$ is included in the span of
$\vv_0,\dots,\vv_{m+\sigma}$.} Suppose that $\vx=(x_1,\dots, x_{4m})\in\R^{4m}$
is any kernel vector
of $\bB$ and define
\[
\vy:=\vx+\sum_{j=1}^m x_{4j-2}\vv_j\in \R^{4m}.
\]
Equation \eqref{eqcclemmaproofa1} and the
definition of $\vy=(y_1,\dots,y_{4m})$  prove $y_{4j-2}=0$
for all $j=1,\dots,m$.
Claim~1 guarantees that $\vy$ is also a kernel vector of  $\bB$. The components
number $2,5,8,\dots, 3m-1$ of $\mathbf{0}=\bB\vy\in\R^{3m}$
read $0=y_{4j-1}+y_{4j+2}$ for all $j=1,\dots,m$ (with $y_{4m+2}:=y_2$).
Since all $y_{4j+2}=0$ vanish by design of $\vy$,
this implies $y_{4j-1}=0$ for  $j=1,\dots,m$.
The components
number $1,4,7,\dots, 3m-2$ of $\mathbf{0}=\bB\vy\in\R^{3m}$
read $0=y_{4j-3}+y_{4j+1}$ for all $j=1,\dots,m$ (with $y_{4m+1}:=y_1$).
These $m$ equations are collected in an equivalent matrix form as
\begin{equation}\label{eqcckeyproofb1}
\begin{pmatrix}
1 & &&1 \\
1 & 1  \\
& \ddots & \ddots \\ 
&&1 & 1
\end{pmatrix}
\begin{pmatrix}
y_1\\
y_5\\
\vdots\\
y_{4m-3}\\
\end{pmatrix}
=\mathbf{0}\in\R^{m}
\end{equation}
with an $m\times m$ coefficient matrix $\mathbf{1}+\bF$ for the unit matrix $\mathbf{1}$
and a  Frobenius companion matrix $\bF$. The determinant
of $\mathbf{1}+\bF$  is $1-(-1)^m$  (this is well known and elementary to verify)
and this motivates the following two cases.
If $m\ge 3$ is odd and $\sigma:=0$, then the linear system \eqref{eqcckeyproofb1}
has only the trivial solution, i.e.,
$0=y_{4j-3}$  for $j=1,\dots, m$ and we set $\vz:=\vy\in \R^{4m}$.
If $m\ge 4$ is even and $\sigma=1$, then  $\mathbf{1}+\bF$ is singular with
a one-dimensional kernel spanned by the vector $(1,-1,+1,\dots,-1)\in\R^m$.
Consequently $y_{4j-3} = (-1)^{j+1} y_{1}$ for all $j=1,\dots,m$ and we define
\[
\vw:=\vy-y_{1}\vv_{m+1}\in \R^{4m}.
\]
A direct calculation shows $0=w_{4j-3}$ for all $j=1,\dots,m$.
In summary of the even or odd cases, we have  for any $m\ge 3$
that  the kernel vector $\vw=(w_1,\dots, w_{4m})\in\R^{4m}$ satisfies
$\vx-\vw \in  \text{\rm span}\{\vv_1,\dots,\vv_{m+\sigma}\}$ and
$0=w_{4j-3}=w_{4j-2}=w_{4j-1}$ for all $j=1,\dots,m$ (the vector $\vv_{m+1}$
does not interfere with the components number $2,3,6,7,10,11,\dots,4m-1$).
Thus 
 \[
  \vw =\sum_{j=1}^m  w_{4j}\ve_{4j}\in\R^{4m}
 \]
has non-zero entries at most in the components $4,8,\dots, 4m$.  The components
number $3,6,9,\dots, 3m$ of $\mathbf{0}=\bB \vw \in\R^{3m}$
read $0=\gamma_j^-w_{4(j-1)}-\gamma_j^+w_{4j}$ for all
$j=1,\dots,m$ (with $w_{0}:=w_{4m}$). The resulting $m$ relations
imply  $ w_{4j}= w_{4m} \prod_{k=j+1}^m  (\gamma_k^+ /\gamma_k^-)
= w_{4m} |T(j)|/|T(m)|$ for all $j=1,\dots,m$. Hence
\[
 \vw =(w_{4m}/|T(m)|)\, \vv_0.
\]
Claim~3 follows from
 $\vw , \vx- \vw \in \text{\rm span}\{\vv_0,\dots,\vv_{m+\sigma}\}$.
\qed

\section{Finish of the proof of Theorem~\ref{thm1}}\label{sectionProofofTheoremrefthm1}
\subsection*{Macro-element methodology}
The arguments  from   \cite{zbMATH03825382,Crouzeix_Falk,Stenberg_macro} are known to lead to Theorem~\ref{thm1} and are merely summarized.
The calculations in the previous sections characterize the kernel of the matrix $\bA$ from \eqref{eqcccubicsection4matrixA}
and then that of the Vandermode matrix $\bM$ from \eqref{eqcccubicsection2matrixM} and \eqref{eqcccubicsection4matrixM} as
the span $\ker\bM= \text{\rm span}\{\vs\}$ of the vector $\vs$ from \eqref{eqccdefofs}.
This and first Lemma~\ref{lemma:ccsas2} and second Lemma~\ref{lemma:ccsas1} prove that the piecewise
divergence operator \eqref{eqccpiecewisedivergenceoperatorlocal} is  surjective {\em onto}  $P_{p-1}(\T(\vz))/\R $.
This allows us to argue as in
the proofs of  \cite[Prop. 3.1, 5.1]{Crouzeix_Falk}  to conclude
that there exists a bounded right-inverse in the sense that
\begin{align}\nonumber
\forall g\in &\P_{p-1}(\T(\vz))/\R  \quad \exists \vv_h\in \CR^p_0(\T(\vz);\R^2)\quad g=\ddiv_\pw \vv_h\text{ and } \\
\label{eqfinalsection1asH1}
&\trinorm{\vv_h}_{ \omega(\vz),\pw }
:=\textstyle \sqrt{\sum_{T\in\T(\vz)} | \vv_h|_{H^1(\text{\rm int}(T))}^2}
\le \|g\|_{L^2(\omega(\vz))}/\gamma.
\end{align}
The operator norm of the bounded inverse
of \eqref{eqccpiecewisedivergenceoperatorlocal} and  so $\gamma>0$ in  \eqref{eqfinalsection1asH1}
exclusively depend on $p$ and the shape regularity  (i.e., on the constant  $\epsilon$ in the definition of $\TT$)
but {\em not} on the mesh-size.
Then \eqref{eqfinalsection1asH1} is, in different notation, what
 is called hypothesis H0 in \cite{Crouzeix_Falk} for a macro-element $\omega(\vz)$.

Recall from the definition of $\TT$ that $M=0$ means that all triangles in $ \T$ have at least one vertex in the interior of the domain $\Omega$;
the more general case allows at most $M$ many edge-connected triangles that link interior vertices to all  triangles  in $ \T$.

\subsection*{Proof for $p=3$ for $M=0$} The point is that any triangle $K$  in $\T$ belongs to the patch $T(\vz)\ni K$ of at least
one interior vertex $\vz\in \V(\Omega)$. Hence
 \cite[Thm.~2.1]{Crouzeix_Falk} applies verbatim to the macro-elements $(\omega(\vz):\vz\in\V(\Omega))$
 and shows that  \eqref{eqfinalsection1asH1}$\equiv$H0 implies Theorem~\ref{thm1}. It moreover provides an
 explicit bound for $\beta_3$ in terms of $\gamma$ and the shape regularity of $\T$ (and a bounded overlap of $L=3$ of the macro-elements).
\qed

\subsection*{Proof for $p=3$ for $M\ge 1$} The macro-elements have to be enlarged in this case to include all triangles; for instance to $\Omega(\vz)$ covered by
the (regular) sub-triangulation $\widehat{\T(\vz)} $ that consists of $\T(\vz)$ plus $M$ layers of
triangles in $\T$ around it. The proof of H0 for $\Omega(\vz)$ requires
Lemma~6.1 in \cite{Crouzeix_Falk}, which is  reformulated as follows for $p=3$.

\begin{lemma}[Crouzeix-Falk]\label{lemmaCrouzeix-Falk}
Suppose the interior edge $E=\partial T_+\cap \partial T_-$ is the common edge of the triangles $T_+,T_-\in\T$ with $\T(E):=\{T_+,T_-\}$ 
and $\omega(E):=\text{\rm int}(T_+\cup T_-)$. 
Then the piecewise divergence
$\ddiv_\pw:$ $ \CR^p_0(\T(E)$; $\R^2)\to P_{p-1}(\T(E))/\R $ has a right inverse in $P_{p-1}(T_+)$
(the full set of polynomials of degree at most $p-1$ in $T_+$) in the sense that
\begin{align*}
\forall g\in \P_{p-1}(T_+) \; & \exists \vv_h\in \CR^p_0(\T(E);\R^2)\quad
g=\ddiv_\pw \vv_h \text{ in }T_+\text{ and }\\
\trinorm{\vv_h}_{\omega(E),\pw }
&:=\sqrt{|\vv_h|_{H^1(\text{\rm int}(T_+))}^2+|\vv_h|_{H^1(\text{\rm int}(T_-))}^2 }\le \|g\|_{L^2(T_+)}/\gamma.
\end{align*}
(The constant $\gamma>0$ depends exclusively on $\epsilon$ and $p\ge 3$.)
\end{lemma}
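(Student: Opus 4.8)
This is the $p=3$ case of \cite[Lemma~6.1]{Crouzeix_Falk}, and I would prove it (for any $p\ge3$) by localizing the Guzman-Scott reduction of Section~\ref{sectionEliminationofinteriordegreesoffreedom} to the two-triangle macro-element $\omega(E)$. Write $A,B$ for the endpoints of $E=\conv\{A,B\}$, let $C$ be the remaining vertex of $T_+$, and let $\La_{T_+}\colon P_{p-1}(T_+)\to\R^4$ collect the three vertex values together with $\int_{T_+}(\cdot)\,dx$ as in Section~\ref{sectionEliminationofinteriordegreesoffreedom}. The plan hinges on one algebraic claim: the linear map
\[
\Phi\colon\CR^p_0(\T(E);\R^2)\longrightarrow\R^4,\qquad \vv_h\longmapsto\La_{T_+}\bigl(\ddiv_\pw\vv_h|_{T_+}\bigr)
\]
is surjective. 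Granting this, the lemma is immediate: given $g\in P_{p-1}(T_+)$, pick $\vw\in\CR^p_0(\T(E);\R^2)$ with $\Phi(\vw)=\La_{T_+}(g)$; then $g-\ddiv_\pw\vw|_{T_+}\in P_{p-1}(T_+)$ lies in $\ker\La_{T_+}$, so Proposition~\ref{propGuzman-Scott} furnishes a bubble $\vb\in\B_p(T_+)$ with $\ddiv\vb=g-\ddiv_\pw\vw|_{T_+}$ in $T_+$; extending $\vb$ by zero to $T_-$ keeps it in $\CR^p_0(\T(E);\R^2)$ (it vanishes on all of $\partial T_+$, hence at the Gauss points of $E$ and of the two outer edges of $T_+$, and is zero on $T_-$), and $\vv_h:=\vw+\vb$ satisfies $g=\ddiv_\pw\vv_h$ in $T_+$.

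For the surjectivity of $\Phi$ I would exhibit four explicit fields of $\CR^p_0(\T(E);\R^2)$ supported in $\omega(E)$. Let $\vn:=\vn_E$, $\vt:=\vt_E$, let $\varphi_A,\varphi_B$ be the barycentric coordinates at $A,B$ of the respective triangle and $\varphi_{C_\pm}$ the one opposite $E$; take the cubic fields $\varphi_A^2\varphi_B\,\vt$, $\varphi_A\varphi_B^2\,\vt$, $\varphi_A^2\varphi_B\,\vn$ (read on both $T_+$ and $T_-$) and the Crouzeix-Raviart edge field $\psi_E\,\vn$ with $\psi_E|_{T_\pm}=\Le_p(1-2\varphi_{C_\pm})$. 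Each of the three cubic fields vanishes identically on the four boundary edges of $\omega(E)$ and is single-valued along $E$, while $\psi_E$ equals $1$ on $E$ from both sides and vanishes at the Gauss points of every boundary edge by the defining zeros of $\Le_p$; hence all four fields lie in $\CR^p_0(\T(E);\R^2)$ for every $p\ge3$. Using only the elementary geometry behind Lemma~\ref{lemma:ccsas3} --- the gradients of barycentric coordinates, $\Le_p'(\pm1)\ne0$, $\int_0^1 s^2(1-s)\,ds=\tfrac1{12}$, and the exactness of $p$-point Gauss quadrature --- I would compute $\La_{T_+}(\ddiv(\cdot))$ of the four fields. The outcome: the two $\vt$-fields carry vertex data only at $A$, resp.\ only at $B$, with weights $\vt\cdot\nabla\varphi_B\ne0$, resp.\ $\vt\cdot\nabla\varphi_A\ne0$ ($\vt_E$ is parallel to no edge of $T_+$), and zero integral; the field $\varphi_A^2\varphi_B\,\vn$ adds the integral $\pm|E|/12$; and $\psi_E\,\vn$ is the only one of the four whose divergence is nonzero at $C$, with value $\pm p(p+1)/h_C\ne0$ there ($h_C=2|T_+|/|E|$), and integral $\pm|E|$. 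A three-step elimination --- the $\vt$-fields reach the coordinate directions at $A$ and $B$; $\varphi_A^2\varphi_B\,\vn$ then reaches the integral direction; finally $\psi_E\,\vn$ reaches the direction at $C$ --- shows the images span $\R^4$. Since every weight above is nonzero for any shape-regular patch, no exceptional configuration occurs here, unlike in \cite[Prop.~3.1]{Crouzeix_Falk}.

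It remains to record the norm bound with a uniform constant. Fix a right inverse $R\colon\R^4\to\CR^p_0(\T(E);\R^2)$ of $\Phi$ (say the minimal-norm one with range in the span of the four fields) and set $\vw=R(\La_{T_+}g)$; since $|\La_{T_+}g|\lesssim\|g\|_{L^2(T_+)}$, $\|R\|$ is finite, and the Guzman-Scott inverse $\ker\La_{T_+}\to\B_p(T_+)$ is bounded --- all being fixed linear maps on finite-dimensional spaces --- the construction yields $\trinorm{\vv_h}_{\omega(E),\pw}\le C\|g\|_{L^2(T_+)}$ with $C$ depending only on the shape of $\omega(E)$. For uniformity, note that every quantity in this estimate is invariant under rigid motions and under scaling (the piecewise $H^1$ seminorm is scale-invariant in two dimensions, and $\ddiv$ and the two $L^2$ norms scale so as to leave the inequality unchanged), so one may normalize $\diam\omega(E)=1$; the normalized two-triangle patches with all angles $\ge\epsilon$ form a compact family on which the operator norm --- produced by the explicit algebraic recipe above --- depends continuously, hence is bounded by some $1/\gamma(\epsilon,p)$. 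The rank-four identity is the conceptual crux but is short once the four test fields are in hand; the fussiest part I anticipate is the final scaling and compactness step for the $\epsilon,p$-dependence of $\gamma$, which is nevertheless routine and of the same kind as the macro-element estimate already invoked in this section.
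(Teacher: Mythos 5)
Your proposal follows the same skeleton as the paper's proof: localize the Guzman--Scott reduction to the two-triangle macro-element $\omega(E)$, exhibit four fields whose images under $\vv_h\mapsto\La_{T_+}(\ddiv_\pw\vv_h|_{T_+})$ span $\R^4$, apply Proposition~\ref{propGuzman-Scott} to absorb the remainder into a bubble in $T_+$, and scale to get $\gamma=\gamma(\epsilon,p)$. The only substantive divergence is the choice of fields. The paper (in the notation of Figure~\ref{fig:nodal_patch}) uses $\varphi_2\varphi_z^2\nabla\varphi_2|_{T_+}$, $\varphi_2^2\varphi_z\nabla\varphi_z|_{T_+}$, $\psi_2\vn$, and $\varphi_2^2\varphi_z^2\vn$; that last field is of degree~$4$, which is why the paper's proof is explicitly restricted to $p\ge4$ and the case $p=3$ is outsourced to \cite[Lemma~6.1]{Crouzeix_Falk}. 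You replace the quartic field by the cubic $\varphi_A^2\varphi_B\vn$ and trade $\nabla\varphi_z$, $\nabla\varphi_2$ for $\vt$ in the first two, so all four fields live in $\CR^p_0(\T(E);\R^2)$ already for $p=3$ and the argument covers $p\ge3$ uniformly --- a modest gain in self-containment. The Vandermonde matrix is different (the paper's quartic field gives a clean row $(0,0,0,|E|)$ because its divergence vanishes at all three vertices, whereas your $\varphi_A^2\varphi_B\vn$ contributes $\vn\cdot\nabla\varphi_B$ at $A$) but the determinant computation you sketch goes through: expand along the $C$-column, which has a single nonzero entry from $\psi_E\vn$, then along the integral column, and what remains is $(\vt\cdot\nabla\varphi_B)(\vt\cdot\nabla\varphi_A)\ne0$. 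One small slip: the parenthetical ``$\vt_E$ is parallel to no edge of $T_+$'' is false as stated, since $\vt_E$ is by definition parallel to $E$, which \emph{is} an edge of $T_+$; the fact you actually need is that $\vt_E$ is not parallel to the two remaining edges $BC$ and $CA$ (equivalently, not perpendicular to $\nabla\varphi_A$ or $\nabla\varphi_B$), which holds since $T_+$ is non-degenerate. The scaling argument at the end is spelled out by compactness rather than by the Piola transform the paper cites, but both serve the same purpose and yours is sound.
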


The proof of Lemma~\ref{lemmaCrouzeix-Falk} for $p\ge 4$ is postponed
and we continue with its application to the cubic case $p=3$. Arguing as in  Lemma~6.2 in \cite{Crouzeix_Falk},
Lemma~\ref{lemmaCrouzeix-Falk} allows to  enlarge $\omega(\vz)$ (where H0 holds) successively to the bigger domain  $\Omega(\vz) $.
This  verifies  H0 for  the macro-element $\Omega(\vz)$ for each $\vz\in\V(\Omega)$:
The piecewise divergence $ \ddiv_\pw:\CR^p_0(\widehat{\T(\vz)};\R^2)\to\P_{p-1}(\widehat{\T(\vz)})/\R$ is  surjective
with a bounded right inverse in the sense that
\begin{align}\nonumber
\forall g  \in \P_{p-1}&(\widehat{\T(\vz)})/\R \quad \exists \vv_h\in \CR^p_0(\widehat{\T(\vz)};\R^2)\quad
g=\ddiv_\pw \vv_h\text{ in }\Omega(\vz) \text{ and }\\  \label{eqfinalsection1asH1next}
&\trinorm{\vv_h}_{ \Omega(\vz),\pw }
:=\textstyle  \sqrt{\sum_{T\in \widehat{\T(\vz)} }|\vvh|_{H^1(\text{\rm int}(T))}^2    }
\le \|g\|_{L^2(\Omega(\vz))}/\gamma
\end{align}
holds for a constant $\gamma>0$ that exclusively depends  on $\epsilon, M,$ and $p$.
The hypothesis  \eqref{eqfinalsection1asH1next}$\equiv$H0 in \cite{Crouzeix_Falk}
 for the macro-elements $(\Omega(\vz):\vz\in\V(\Omega))$ allows an application  of \cite[Thm.~2.1]{Crouzeix_Falk}
with a larger overlap constant $L$ (that depends on $\epsilon$ and $ M$). This concludes the proof of Theorem~\ref{thm1} for $p=3$. \qed

\subsection*{Proof  of Theorem~\ref{thm1} for $p=5,7,9,\dots $}
The results in Section~2 hold   for any  polynomial degree $p\ge 3$
and lead to the design of $5m$ functions in \eqref{eqcccubicsection1}.
The function $\psi_j$ defined in \eqref{eqcccpsiinubicsection}, however,  does not belong to $\CR^p_0(\T)$ for odd $p\ge 5$ and,
with the notation of Figure~\ref{fig:nodal_patch}, may be replaced by
\begin{equation}\label{eqcccpsiinubicsectionpartplarge}
\psi_j(x):= \frac{6}{\Le_p'(1)} \Bigl( \Le_p( 1-2\varphi_k(x))  +(5\Le_p'(1)-30)\varphi_z^2(x)\varphi_j^2(x)\Bigr)
\end{equation}
at $ x\in T(j)$ { with } $k=j+1$ resp.\  at $x\in T(j-1)$  with $k=j-1 $ and zero elsewhere
($\Le_p'(1)>0$ denotes the derivative of the Legendre polynomial of degree $p$ at $1$);
then $\psi_j$ in \eqref{eqcccpsiinubicsectionpartplarge} coincides with that in \eqref{eqcccpsiinubicsection} for $p=3$.

The important point is that \eqref{eqcccpsiinubicsectionpartplarge} defines a (scalar) Crouzeix-Raviart function $\psi_j\in \CR^p_0(\T)$
for all odd $p\ge 3$  with a scaling such that
$\ddiv ( \psi_j\vn(j))$  is equal to $-12  \vn(j)\cdot\nabla\varphi_k $ at all three vertices of $T(\ell)$ and shifted in the integral along $E(j)$ such that
the integral  of $\ddiv ( \psi_j\vn(j))$ over $T(\ell)$ is equal to $(-1)^{\ell+1-j}|E(j)|$ for the relevant triangles number $\ell=j-1,j$. Those values are
independent of $p=3,5,7,\dots$.
In other words,  for any $T\in\T(\vz)$,  the  functionals in   $\La_T$ from  Section~\ref{sectionEliminationofinteriordegreesoffreedom} evaluated at  $\ddiv(\psi_j\vn(j))$
with  $\psi_j$ from \eqref{eqcccpsiinubicsectionpartplarge}  coincide for all $p=3,5,7,\dots$. As a consequence the Vandermonde matrix $\bM$
in \eqref{eqcccubicsection2matrixM} {\em is the same for all}  $p=3,5,7,\dots$;  whence $\ker \bM=\{\vs\}$.
This and the macro-element methodology  allow  the proof  analogously to the cubic case; hence further details are omitted.

\subsection*{Proof of Lemma~\ref{lemmaCrouzeix-Falk} for $p\ge 4$}
Suppose $E:=E(2)=\partial T_1\cap \partial T_2$ is the common edge of the triangles $T_+:=T_1$ and $T_-:=T_2$ in $\T(E):=\{T_1,T_2\}$ in the notation of
Figure~\ref{fig:nodal_patch}.
Let $\psi_2$ denote the function in \eqref{eqcccpsiinubicsectionpartplarge} for $j=2$ with  the edge $E\equiv E(2)$ with unit normal $ \vn:=\vn(2)$.
The considerations  in the proof of Lemma~\ref{lemma:ccsas3} and in the previous proof  show that the
 $4\times 4$ Vandermonde matrix of  the four functionals in $\La_{T(1)}$ and the four  functions
 \[
12\, \varphi_2\varphi_z^2\nabla\varphi_2|_{T(1)},
 12\, \varphi_2^2\varphi_z\nabla\varphi_z|_{T(1)},
 \psi_2 \vn,  30\, \varphi_2^2\varphi_z^2  \vn \in \CR^p_0(\T(E);\R^2)
\]
reads, for  odd $p\ge 3$ and with the abbreviation $\delta:=-12  \vn\cdot\nabla\varphi_1|_{T(1)}>0$, as
\[
\begin{pmatrix}
12\, |\nabla\varphi_2|_{T(1)}|^2 & 0 & 0 & |E|\, \vn\cdot \nabla\varphi_2|_{T(1)}\\
0&0&12\, |\nabla\varphi_z|_{T(1)}|^2& |E|\,\vn\cdot \nabla\varphi_z|_{T(1)}\\
\delta &(-1)^{p+1} \delta&\delta   &  |E| \\
0&0&0& |E|
\end{pmatrix}.
\]
(For even $p$, $\Le_p'(-1)=-\Le_p'(1)$ leads to a change of signs in the second entry of the third row.)
The $4\times 4$ Vandermonde matrix is regular for all $p\ge 4$.
In other words, the triple formed by the triangle $T$, the vector space of the four functions, and the four functionals in $\La_{T(1)}$ is a finite element in the sense of Ciarlet
and allows for an interpolation. That means, given  any polynomial $q\in P_{p-1}(T(1))$, there exists some
\[
\vv_h\in\text{\rm span}\{  \varphi_2\varphi_z^2\nabla\varphi_2|_{T(1)},
 \varphi_2^2\varphi_z\nabla\varphi_z|_{T(1)},
 \psi_2 \vn,  \varphi_2^2\varphi_z^2  \vn \} 
 \]
with $\La_{T(1)}(q-\ddiv \vv_h)=\mathbf{0}$. Proposition~\ref{propGuzman-Scott} shows $q=\ddiv (\vv_h+ \vb)$  in $T(1)$ for some $\vb\in \B_p(T(1))\subset  \CR^p_0(\T(E);\R^2)$.
Hence the piecewise divergence $\ddiv_\pw:$ $ \CR^p_0(\T(E)$; $\R^2)\to P_{p-1}(\T(E))/\R $ maps surjective onto  $P_{p-1}(T(1))$ and so allows a right-inverse as
asserted. A scaling argument with a contra-variant Piola
transform  (cf., e.g., \cite[p.~79]{Monkbook} or \cite[p.~106]{ErnGuermondFEM1})
shows that $\gamma>0$ depends  on $p$ and  the shape regularity of $\T(E)$.
\qed

 \subsection*{Minimal  Crouzeix-Raviart method}
 This paper has identified Crouzeix-Raviart velocity sub-spaces sufficient for the $\inf$-$\sup$ stability  for $p=3,5,7,\dots$
 as {\em the five functions}  in \eqref{eqcccubicsection1} for any interior edge $E(j)$ plus all the vector-valued bubble-functions $\bB_p(\T)$ in the triangulation $\T$.
 A  practical implementation  for odd $p=3,5,7,\dots$ may therefore utilize  a strict subspace of $\CR^p_0(\T(E);\R^2)$, that consists of
the conforming Lagrange polynomials of degree $p$ in two components and
the scalar $p$-th order  Crouzeix-Raviart edge-bubble function in the normal direction  for each interior edge.

\section*{acknowledgements}
The first author acknowledges the support by the Institute for Mathematical
Research (FIM) at ETH Z\"urich for a research visit during which part of
this work was carried out.

\bibliographystyle{abbrv}
\bibliography{nlailu}
\end{document}